\newcommand{\dup}[2]{\langle#1,#2 \rangle}
\newcommand{\ip}[2]{(#1,#2)}
\newcommand{\Co}{\mathrm{Co}} 
\newcommand{\cdual}{{\ast}}
\newcommand{\cone}{{\Omega}}
\newcommand{\besov}{\dot{B}}
\newcommand{\supp}{\mathrm{supp}}
\newcommand{\pdif}{\partial}
\newcommand{\grG}{\mathcal{G}}
\newcommand{\grH}{\mathcal{H}}
\newcommand{\grK}{\mathcal{K}}
\newcommand{\grA}{\mathcal{A}}
\newcommand{\grN}{\mathcal{N}}
\newtheorem{theorem}{Theorem}[section]
\newtheorem{lemma}[theorem]{Lemma}
\newtheorem{corollary}[theorem]{Corollary}
\theoremstyle{definition}
\newtheorem{remark}[theorem]{Remark}
\begin{document}

\title{Atomic decompositions of Besov spaces related to symmetric cones} 
\subjclass[2000]{Primary
  43A15,42B35; Secondary 22D12} 
\keywords{Coorbit spaces, 
  Gelfand Triples, Representation theory of Locally Compact
  Groups} 

\author{Jens Gerlach Christensen} 
\address{ Department of Mathematics, Tufts University} 
\email{jens.christensen@tufts.edu}
\urladdr{http://jens.math.tufts.edu}

\begin{abstract}
In this paper we extend the atomic decompositions 
previously obtained for Besov spaces related to the forward light cone
to general symmetric cones.
We do so via wavelet theory adapted 
to the cone. The wavelet transforms sets up an isomorphism between
the Besov spaces and certain reproducing kernel function spaces
on the group, and sampling of the transformed data will provide
the atomic decompositions and frames for the Besov spaces.
\end{abstract}

\maketitle

\section{Introduction}
\label{sec:intro}
Besov spaces related to symmetric cones were introduced by
Bekoll\'e, Bonami, Garrigos and Ricci in a
series of papers \cite{Bekolle2000, Bekolle2001} and \cite{Bekolle2004}.
The purpose was to use Fourier-Laplace extensions
for the Besov spaces in order to investigate the continuity of 
Bergman projections and boundary values for Bergman spaces 
on tube type domains.

Classical homogeneous Besov spaces were introduced via 
local differences and modulus of continuity.
Through work of Peetre \cite{Peetre76}, Triebel \cite{Triebel1988b}
and Feichtinger and Gr\"ochenig \cite{Feichtinger1989a} these spaces
were given a characterization via wavelet theory.
The theory of Feichtinger and Gr\"ochenig 
\cite{Feichtinger1989a,Grochenig1991}
further provided atomic decompositions
and frames for the homogeneous Besov spaces.

In the papers \cite{Christensen2011} and \cite{Christensen2012}
we gave a wavelet characterization and several atomic decompositions
for the Besov spaces related to the special case of
the forward light cone. In this paper we will show that the
machinery carries over to  Besov spaces related to any symmetric cone.
Our approach contains some representation theoretic simplifications
compared with the work of Feichtinger and Gr\"ochenig, and we
in particular exploit smooth representations of Lie groups.
The results presented here 
are also interesting in the context of recent results by F\"uhr \cite{Fuhr2013}
dealing with coorbits for wavelets with general dilation groups.

\section{Wavelets, sampling and atomic decompositions}
In this section we use representation theory to
set up a correspondance between
a Banach space of distributions and a reproducing kernel Banach space on
a group. For details we refer to 
\cite{Christensen2009,Christensen2011,Christensen2012}
which generalizes work in \cite{Feichtinger1989a}.

\subsection{Wavelets and coorbit theory}
\label{sec:coorbit}

Let $S$ be a Fr\'echet space and let $S^\cdual$ be the 
conjugate linear dual equipped with the weak* topology (any reference 
to weak convergence in $S^*$ will always refer to the weak* topology). 
We assume that $S$ is
continuously embedded and 
weakly dense in $S^\cdual$. The conjugate
dual pairing of elements $\phi\in S$ and $f\in S^\cdual$ will be denoted
by $\dup{f}{\phi}$. Let $\grG$ be a locally compact 
group with a fixed left Haar measure
$dg$, and assume that $(\pi,S)$ is a continuous representation of
$\grG$, i.e. $g\mapsto \pi(g)\phi$
is continuous for all $\phi\in S$. 
A vector $\phi\in S$ is called \emph{cyclic} if 
$\dup{f}{\pi(g)\phi}=0$ for all $g\in \grG$ means that
$f=0$ in $S^*$.
As usual, define the contragradient
representation $(\pi^\cdual,S^\cdual)$ by
\begin{equation*}
  \dup{\pi^\cdual(g)f}{\phi}=\dup{f}{\pi(g^{-1})\phi} \text{\ for $f\in S^\cdual.$}
\end{equation*}
Then $\pi^*$ is a continuous representation of $\grG$ on $S^\cdual$. 
For a fixed vector 
$\psi\in S$ define the linear map $W_\psi:S^*\to C(\grG)$ by
\begin{equation*}
  W_\psi(f)(g) = \dup{f}{\pi(g)\psi} = \dup{\pi^*(g^{-1})f}{\psi}.
\end{equation*}
The map $W_\psi$ is called \emph{the voice transform} or 
\emph{the wavelet transform}.
If $F$ is a function on $\grG$ then define the left translation of $F$
by an element $g\in \grG$ as
\begin{equation*}
  \ell_g F(h) = F(g^{-1}h).
\end{equation*}
A Banach space of functions $Y$ is called left
invariant if $F\in Y$ implies that $\ell_g F\in Y$ for all
$g\in \grG$ and there is a constant 
$C_g$ such that $\| \ell_g F\|_Y\leq C_g \| F\|_Y$
for all $F\in Y$.
In the following we will always assume that 
the space $Y$ of functions on $\grG$ is a left invariant Banach space
for which 
convergence implies convergence (locally) in Haar measure on $\grG$.
Examples of such spaces are $L^p(\grG)$ for $1\leq p \leq \infty$ 
and any space continuously included in an $L^p(\grG)$.

A non-zero cyclic vector $\psi$ 
is called an \emph{analyzing vector} for $S$ if for all $f\in S^*$ 
the following convolution reproducing formula holds
\begin{equation*}
  W_\psi(f)*W_\psi(\psi) = W_\psi(f).
\end{equation*}
Here convolution between two functions $F$ and $G$ on $\grG$ 
is defined by $$F*G(h) = \int F(g)G(g^{-1}h)\,dg. $$

For an analyzing vector $\psi$ define the subspace $Y_\psi$ of $Y$ by 
$$Y_\psi = \{ F\in Y\, |\, F=F*W_\psi(\psi) \},$$ 
and let $$\Co_S^\psi Y = \{ f\in S^*\, |\, W_\psi(f)\in Y  \}$$
equipped with the norm $\| f \| = \| W_\psi(f)\|_Y$.

A priori we do not know if the spaces $Y_\psi$ and $\Co_S^\psi Y$
are trivial, but the following theorem lists conditions that 
ensure they are isometrically isomorphic Banach spaces.
The main requirements are the existence of a reproducing formula
and a duality condition involving $Y$.

\begin{theorem}
  \label{thm:coorbitsduality}
  Let $\pi$ be a representation of a group $\grG$ on a 
  Fr\'echet space $S$ with conjugate dual $S^*$
  and let $Y$ be a left invariant Banach function space on $\grG$.
  Assume $\psi$ is
  an analyzing vector for $S$ and that the mapping
  \begin{equation*}
    Y\times S\ni (F,\phi) \mapsto \int_\grG F(g) \dup{\pi^*(g)\psi}{\phi}\, dg\in\mathbb{C}
  \end{equation*}
  is continuous. Then 
  \begin{enumerate}
  \item $Y_\psi$ is
    a closed reproducing kernel subspace of $Y$ with reproducing kernel
    $K(g,h)=W_\psi(\psi)(g^{-1}h)$.\label{prop1}
  \item The space $\Co_S^\psi Y$
    is a $\pi^\cdual$-invariant Banach space.     \label{prop2}
  \item $W_\psi:\Co_S^\psi Y\to Y$ intertwines $\pi^\cdual$ and left
    translation. \label{prop3}
  \item If left translation is continuous on $Y,$ then
    $\pi^*$ acts continuously on $\Co_S^\psi Y.$\label{prop4}
  \item $\Co_S^\psi Y = \{ \pi^\cdual(F)\psi \mid F\in
    Y_\psi\}$. \label{prop5}
  \item $W_\psi:\Co_S^\psi Y \to Y_\psi$ is an isometric
    isomorphism.\label{prop6}
  \end{enumerate}
\end{theorem}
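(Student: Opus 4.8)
The plan is to build everything on a single weak integral together with one fundamental identity. The continuity hypothesis says precisely that for each fixed $F\in Y$ the conjugate-linear functional $\phi\mapsto \int_\grG F(g)\dup{\pi^*(g)\psi}{\phi}\,dg$ is continuous on $S$, and therefore defines an element of $S^*$, which I denote $\pi^*(F)\psi$, characterized by $\dup{\pi^*(F)\psi}{\phi}=\int_\grG F(g)\dup{\pi^*(g)\psi}{\phi}\,dg$. First I would record the computation $\dup{\pi^*(g)\psi}{\pi(h)\psi}=\dup{\psi}{\pi(g^{-1}h)\psi}=W_\psi(\psi)(g^{-1}h)$, and then, substituting $\phi=\pi(h)\psi$ into the defining relation, obtain the fundamental identity
\[
 W_\psi(\pi^*(F)\psi) = F*W_\psi(\psi), \qquad F\in Y.
\]
This single identity is the engine for parts (5) and (6).

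Next I would dispatch the intertwining statement (3) by the direct calculation $W_\psi(\pi^*(g)f)(h)=\dup{f}{\pi(g^{-1}h)\psi}=\ell_g W_\psi(f)(h)$; combined with left invariance of $Y$ this immediately yields the $\pi^*$-invariance asserted in (2), and combined with continuity of left translation it yields (4) through $\|\pi^*(g)f-f\|=\|\ell_g W_\psi(f)-W_\psi(f)\|_Y$. For (1), the same continuity hypothesis applied with $\phi=\pi(h)\psi$ shows that $F\mapsto (F*W_\psi(\psi))(h)$ is a continuous linear functional on $Y$; since this functional restricts to point evaluation $F\mapsto F(h)$ on $Y_\psi$ and is represented by integration against $K(\cdot,h)=W_\psi(\psi)((\cdot)^{-1}h)$, the space $Y_\psi$ is a reproducing kernel space with the stated kernel. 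Closedness of $Y_\psi$ would then follow by taking $F_n\to F$ in $Y$ with $F_n\in Y_\psi$: continuity of the evaluation functionals gives $(F_n*W_\psi(\psi))(h)\to(F*W_\psi(\psi))(h)$, while convergence in $Y$ implies convergence locally in Haar measure, so passing to a subsequence along which $F_n(h)\to F(h)$ almost everywhere forces $F=F*W_\psi(\psi)$.

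The heart is parts (5) and (6). That $W_\psi$ maps $\Co_S^\psi Y$ into $Y_\psi$ is immediate from the analyzing-vector reproducing formula $W_\psi(f)=W_\psi(f)*W_\psi(\psi)$, isometry holds by definition of the norm, and injectivity is exactly cyclicity of $\psi$. For surjectivity I would take $F\in Y_\psi$, form $f=\pi^*(F)\psi\in S^*$, and apply the fundamental identity to get $W_\psi(f)=F*W_\psi(\psi)=F\in Y$; hence $f\in\Co_S^\psi Y$ and $W_\psi(f)=F$. Reading this backwards, for any $f\in\Co_S^\psi Y$ the element $\pi^*(W_\psi(f))\psi$ has the same wavelet transform as $f$, so cyclicity gives $f=\pi^*(W_\psi(f))\psi$ with $W_\psi(f)\in Y_\psi$; this is the content of (5). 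Finally, since $Y_\psi$ is a closed subspace of the Banach space $Y$ and $W_\psi$ is an isometric isomorphism onto it, $\Co_S^\psi Y$ inherits completeness, finishing (2) and (6).

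I would expect the only genuine obstacle to be the foundational step: verifying that the weak integral $\pi^*(F)\psi$ really lands in $S^*$ and that the fundamental identity holds, i.e. that the pairing against $\pi(h)\psi$ may legitimately be moved inside the integral. This is exactly where the continuity hypothesis does all the work, and it is also what makes the closedness argument for $Y_\psi$ go through in tandem with convergence in Haar measure; once these are secured, parts (1)--(6) follow by the bookkeeping sketched above.
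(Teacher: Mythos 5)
Your proposal is correct, and it follows essentially the same route as the source: the paper states Theorem~\ref{thm:coorbitsduality} without proof, deferring to \cite{Christensen2009,Christensen2011,Christensen2012}, and those proofs are built on exactly your two ingredients --- the weak integral $\pi^*(F)\psi \in S^*$ supplied by the continuity hypothesis and the identity $W_\psi(\pi^*(F)\psi)=F*W_\psi(\psi)$ --- from which the intertwining, invariance, reproducing kernel, closedness (via convergence locally in Haar measure), and the isometric isomorphism onto $Y_\psi$ follow just as you describe.
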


Note that (\ref{prop5}) states that each member of $\Co_S^\psi Y$
can be written weakly as
\begin{equation*}
  f = \int_\grG W_\psi(f)(g)\pi^*(g)\psi\,dg.
\end{equation*}
In the following section we will explain when this reproducing formula
can be discretized and how coefficents $\{ c_i(f)\}$
can be determined in order to obtain an expression
\begin{equation*}
  f = \sum_i c_i(f) \pi^*(g_i)\psi
\end{equation*}
for any $f\in \Co_S^\psi Y$.

\subsection{Frames and atomic decompositions through sampling on Lie groups}
\label{sec:sampling}
In this section we will decompose the coorbit spaces constructed in 
the previous section. For this we need sequence spaces 
arising from Banach function spaces on $\grG$. 
The decomposition of coorbit spaces is aided by smooth representations
of Lie groups.

We assume that $\grG$ is a Lie group with Lie algebra denoted $\mathfrak{g}$.
A vector $\psi\in S$ is called $\pi$-weakly differentiable in the direction
$X\in\mathfrak{g}$ if there is a vector denoted $\pi(X)\psi \in S$
such that for all $f\in S^*$ 
\begin{equation*}
  \dup{f}{\pi(X)\psi}=
  \frac{d}{dt}\Big|_{t=0}\dup{f}{\pi(e^{tX})\psi}.
\end{equation*}
Fix a basis $\{ X_i\}_{i=1}^{\dim \grG}$ for $\mathfrak{g}$, then
for a multi-index $\alpha$ we define
$\pi(D^\alpha)\psi$ (when it makes sense) by
\begin{equation*}
  \dup{f}{\pi(D^\alpha)\psi}=
  \dup{f}{\pi(X_{\alpha(k)}) \pi(X_{\alpha(k-1)}) \cdots \pi(X_{\alpha(1)})\psi}.
\end{equation*}

A vector $f\in S^*$ is called $\pi^*$-weakly differentiable 
in the direction
$X\in\mathfrak{g}$ if there is a
vector denoted $\pi^*(X)f \in S^*$ 
such that for all $\phi\in S$ 
\begin{equation*}
  \dup{\pi^*(X)f}{\psi}=
  \frac{d}{dt}\Big|_{t=0}\dup{\pi^*(e^{tX})f}{\psi}.
\end{equation*}
For a multi-index $\alpha$ define 
$\pi^*(D^\alpha)\psi$ (when it makes sense) by
\begin{equation*}
  \pi^*(D^\alpha)\psi =
  \pi^*(X_{\alpha(k)}) \pi^*(X_{\alpha(k-1)}) \cdots \pi^*(X_{\alpha(1)})\psi
\end{equation*}

Let $U$ be a relatively compact set in $\grG$
and let $I$ be a countable set. A sequence
$\{ g_i\}_{i\in I}\subseteq \grG$ is called $U$-dense if
$\{ g_iU\}$ cover $\grG$, and $V$-separated if for some
relatively compact set $V\subseteq U$ the $g_iV$ are pairwise disjoint.
Finally we say that $\{ g_i\}_{i\in I}\subseteq \grG$
is well-spread if it is $U$-dense and a finite union of
$V$-separated sequences.
For properties of such sequences we refer to \cite{Feichtinger1989a}.
A Banach space $Y$ of measurable functions
is called solid, if $|f|\leq |g|$, $f$ measurable 
and $g\in Y$ imply that $f\in Y$. 
For a $U$-relatively separated sequence of points $\{g_i \}_{i\in I}$ in $\grG$,
and a solid Banach function space $Y$ on $\grG$,
define the space $Y^\#(I)$ of sequences $\{\lambda_i\}_{i\in I}$ for 
which 
\begin{equation*}
  \| \{ \lambda_i\}\|_{Y^\#} 
  := \left\| \sum_{i\in I} |\lambda_i|1_{g_iU} \right\|_Y <\infty.
\end{equation*}
These sequence spaces were introduced in \cite{Feichtinger1985} (see also 
\cite{Feichtinger1989a}), and we remark that they are independent
on the choice of $U$ (for a fixed well spread sequence).
For a well-spread set $\{g_i\}$ a 
$U$-bounded uniform partition of unity ($U$-BUPU) is a collection
of functions $\psi_i$ on $\grG$ such that $0\leq \psi_i \leq 1_{g_iU}$
and $\sum_i \psi_i =1$.

In the sequel we will only investigate 
sequences which are well-spread with respect to 
compact neighbourhoods of the type
\begin{equation*}
  U_\epsilon = \{ e^{t_1X_1}\cdots e^{t_nX_n}  
  \mid t_1,\dots,t_n \in [-\epsilon,\epsilon] \},
\end{equation*}
where $\{X_i\}_{i=1}^n$ is the fixed basis for $\mathfrak{g}$.
\begin{theorem}
  \label{thm:discretization}
  Let $Y$ be a solid and left and right invariant 
  Banach function space for which right translations are continuous.
  Assume there is a cyclic vector $\psi\in S$ 
  satisfying the properties of Theorem~\ref{thm:coorbitsduality}.
  and that $\psi$ is both $\pi$-weakly and $\pi^*$-weakly 
  differentiable up to order
  $\mathrm{dim}(\grG)$.
  If the mappings
    $Y\ni F\mapsto F*|W_{\pi(D^\alpha)\psi}(\psi)|\in Y$
  are continuous for all $|\alpha|\leq \mathrm{dim}(\grG)$,
  then we can choose $\epsilon$ and positive constants $A_1,A_2$ 
  such that for any $U_\epsilon$-relatively separated set $\{ g_i\}$ 
  \begin{equation*}
    A_1 \| f\|_{\Co_S^\psi Y} 
    \leq \| \{ \dup{f}{\pi(g_i)\psi} \}\|_{Y^\#}
    \leq A_2 \| f\|_{\Co_S^\psi Y}.
  \end{equation*}
  Furthermore, there is an operator $T_1$ such that
  \begin{equation*}
    f =
    W_\psi^{-1 }T_1^{-1} \left( \sum_i W_\psi(f)(g_i)\psi_i*W_\psi(\psi) \right),
  \end{equation*}
  where $\{\psi_i \}$ is any $U_\epsilon$-BUPU for which
  $\supp(\psi_i)\subseteq g_iU_\epsilon$
\end{theorem}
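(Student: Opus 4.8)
The plan is to transport the problem from the coorbit space to its image under the wavelet transform and to discretize the reproducing identity there. By Theorem~\ref{thm:coorbitsduality}, $W_\psi$ is an isometric isomorphism of $\Co_S^\psi Y$ onto the reproducing kernel space $Y_\psi$, and every $F=W_\psi(f)\in Y_\psi$ satisfies $F=F*R$ with $R:=W_\psi(\psi)$. Hence it suffices to establish the two norm inequalities and the reconstruction formula for $F\in Y_\psi$, after which applying the isometry $W_\psi^{-1}$ returns the statement for $f$. Following the Feichtinger--Gr\"ochenig scheme, I would introduce the sampling (synthesis) operator $T_1F=\sum_i F(g_i)\,\psi_i*R$ attached to a $U_\epsilon$-BUPU $\{\psi_i\}$ with $\supp\psi_i\subseteq g_iU_\epsilon$. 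Since $R*R=R$, each $\psi_i*R$ lies in $Y_\psi$, so $T_1$ maps $Y_\psi$ into itself, and $T_1$ is a discrete analogue of the identity $F=F*R=\sum_i\int\psi_i(g)F(g)R(g^{-1}\cdot)\,dg$; the whole proof reduces to showing that $T_1$ is close to the identity on $Y_\psi$ when $\epsilon$ is small.

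The analytic core is an oscillation estimate that uses the smoothness of $\psi$. Two identities drive it: differentiating $F(ge^{tX})=\dup{f}{\pi(g)\pi(e^{tX})\psi}$ shows that the right-invariant derivative of $F$ along $X$ equals $W_{\pi(X)\psi}(f)$, and inserting the weak reproducing formula $f=\int F(g)\pi^*(g)\psi\,dg$ into $W_{\pi(D^\alpha)\psi}(f)(h)=\dup{f}{\pi(h)\pi(D^\alpha)\psi}$ yields the cross-reproducing identity $W_{\pi(D^\alpha)\psi}(f)=F*W_{\pi(D^\alpha)\psi}(\psi)$. Writing $R_\alpha:=W_{\pi(D^\alpha)\psi}(\psi)$, the right-invariant derivatives of $F$ therefore satisfy $D^\alpha F=F*R_\alpha$. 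I would then invoke a local Sobolev embedding on the $n$-dimensional manifold $\grG$, $n=\dim\grG$ (this is exactly where differentiability up to order $\dim\grG$ is needed), to bound both the local maximal function $\sup_{u\in U_\epsilon}|F(g_iu)|$ and the oscillation $\sup_{u\in U_\epsilon}|F(g_iu)-F(g_i)|$ pointwise by $\sum_{|\alpha|\le n}$ of local averages of $|D^\alpha F|=|F*R_\alpha|$, the oscillation carrying an extra factor that tends to $0$ with $\epsilon$. Dominating these local averages by $|F|*|R_\alpha|$ after a harmless enlargement of the kernels, and applying the hypothesis that $F\mapsto F*|R_\alpha|$ is bounded on the solid space $Y$, I obtain $\|F-T_1F\|_Y\le C(\epsilon)\|F\|_Y$ with $C(\epsilon)\to0$, and simultaneously the sampling bound $\|\{F(g_i)\}\|_{Y^\#}=\|\sum_i|F(g_i)|1_{g_iU_\epsilon}\|_Y\le A_2\|F\|_Y$; right invariance and continuity of right translations on $Y$ are what let the local pieces be reassembled into these global solid norms.

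With the oscillation estimate in hand the remaining steps are formal. Choosing $\epsilon$ so small that $C(\epsilon)<1$, the operator $T_1$ is invertible on $Y_\psi$ by a Neumann series, whence $F=T_1^{-1}\big(\sum_i F(g_i)\,\psi_i*R\big)$; applying $W_\psi^{-1}$ and recalling $F(g_i)=\dup{f}{\pi(g_i)\psi}=W_\psi(f)(g_i)$ gives precisely the stated reconstruction $f=W_\psi^{-1}T_1^{-1}\big(\sum_i W_\psi(f)(g_i)\psi_i*W_\psi(\psi)\big)$. For the frame inequalities, the upper bound is the sampling estimate already derived, while the lower bound follows from $\|F\|_Y\le(1-C(\epsilon))^{-1}\|T_1F\|_Y$ together with the solid estimate $\|T_1F\|_Y=\|\sum_iF(g_i)\psi_i*R\|_Y\le A\,\|\{F(g_i)\}\|_{Y^\#}$; transporting through the isometry $\|F\|_Y=\|f\|_{\Co_S^\psi Y}$ produces $A_1\|f\|_{\Co_S^\psi Y}\le\|\{\dup{f}{\pi(g_i)\psi}\}\|_{Y^\#}\le A_2\|f\|_{\Co_S^\psi Y}$.

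The step I expect to be the main obstacle is the uniform oscillation estimate, that is, producing a constant $C(\epsilon)$ with $C(\epsilon)\to0$: one must marry the local Sobolev embedding (which converts suprema and oscillations into averages of derivatives) with the cross-reproducing identities and the convolution-boundedness hypotheses, all while tracking the $\epsilon$-dependence and summing the local estimates into the amalgam-type norm $Y^\#$ without losing the smallness. Controlling the passage from the pointwise local bounds to the solid-space norms, and verifying that enlarging the kernels $|R_\alpha|$ keeps the associated convolution operators bounded, is the delicate part; the invertibility of $T_1$ and the algebraic rearrangement of the reconstruction formula are then routine.
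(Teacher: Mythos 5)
You should first note that this paper never proves Theorem~\ref{thm:discretization}: it is imported from \cite{Christensen2011,Christensen2012} (see the remark at the start of Section 2), so your proposal has to be measured against the proof given there. Your skeleton is the same one: pass to $Y_\psi$ by the isometry $W_\psi$, set $T_1F=\sum_i F(g_i)\psi_i*W_\psi(\psi)$, prove $\|F-T_1F\|_Y\leq C(\epsilon)\|F\|_Y$ with $C(\epsilon)\to0$, invert $T_1$ by a Neumann series, and extract the two frame inequalities from the sampling estimate and from $\|T_1F\|_Y\leq C\|\{F(g_i)\}\|_{Y^\#}$. Your two driving identities, $D^\alpha F=W_{\pi(D^\alpha)\psi}(f)$ and $W_{\pi(D^\alpha)\psi}(f)=F*W_{\pi(D^\alpha)\psi}(\psi)$, are exactly the ones used there, and your ``local Sobolev embedding'' is in substance the reference's iterated fundamental-theorem-of-calculus expansion along the coordinates $e^{t_1X_1}\cdots e^{t_nX_n}$ of $U_\epsilon$, which is why each basis direction occurs at most once and order $\dim\grG$ suffices.

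The one place where your plan is not yet a proof is the step you yourself flag: dominating the local averages by $|F|*|R_\alpha|$ ``after a harmless enlargement of the kernels.'' The expansion leaves the derivative terms $|F*R_\alpha|$ evaluated at moved points $gu$ with $u\in U_\epsilon$. If you absorb the movement into the kernel by passing to the local maximal kernel $R_\alpha^\#(k)=\sup_{u\in U_\epsilon}|R_\alpha(ku)|$, the hypotheses give you nothing: boundedness of $F\mapsto F*|R_\alpha|$ does not imply boundedness of $F\mapsto F*R_\alpha^\#$, and you cannot repair $R_\alpha^\#$ by yet another oscillation estimate, because at top order $|\alpha|=\dim\grG$ no higher derivatives are assumed to exist. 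The resolution in \cite{Christensen2012} is to never enlarge the kernel: the moved evaluation point is a right translate of a fixed function, $(|F|*|R_\alpha|)(gu)=\rho_u\bigl(|F|*|R_\alpha|\bigr)(g)$ where $\rho_uH(g)=H(gu)$. One therefore takes $Y$-norms first, uses that the operators $\rho_u$ are uniformly bounded on $Y$ for $u$ in the compact set $U_\epsilon$ (this is precisely what the hypotheses ``right invariant'' and ``right translations continuous'' are for, via uniform boundedness), and only then applies the assumed convolution bound, the box integrals supplying the factor $\epsilon$ that makes $C(\epsilon)\to0$. With that substitution your outline closes; everything else (Neumann series, $F(g_i)=\dup{f}{\pi(g_i)\psi}$, bounded overlap for the upper bound, solidity and the $\alpha=0$ convolution bound for the lower bound) is correct as stated.
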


The operator $T_1:Y_\psi\mapsto Y_\psi$ (first introduced in
\cite{Grochenig1991}) is defined by
\begin{equation*}
  T_1 F = \sum_i F(g_i)\psi_i*W_\psi(\psi).
\end{equation*}

\begin{theorem}
  \label{thm:12}
  Let $\psi\in S$ be $\pi^*$-weakly differentiable up to order
  $\mathrm{dim}(\grG)$ satisfying
  the assumptions in Theorem~\ref{thm:coorbitsduality} and 
  let $Y$ be a solid left and right invariant Banach
  function space for which right translation is continuous.
  Assume that $Y\ni F \mapsto F*|W_\psi(\pi^*(D^\alpha) \psi)| \in Y$
  is continuous for $|\alpha|\leq \mathrm{dim}(\grG)$.
  We can choose $\epsilon$ small enough that 
  for any $U_\epsilon$-relatively separated set $\{ g_i\}$
  there is an invertible operator $T_2$ and functionals
  $\lambda_i$ (defined below) such that for any $f \in \Co_S^\psi Y$ 
  \begin{equation*}
    f = \sum_i \lambda_i(T^{-1}_2 W_\psi(f)) \pi(g_i)\psi
  \end{equation*}
  with convergence in $S^*$. The convergence is in $\Co_S^\psi Y$ if
  $C_c(\grG)$ are dense in $Y$.
\end{theorem}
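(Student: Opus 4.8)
The plan is to reproduce, in the present smooth-vector setting, Gröchenig's discretization of the continuous reconstruction formula recorded after Theorem~\ref{thm:coorbitsduality}. First I would make the functionals and the operator $T_2$ explicit. Fix a $U_\epsilon$-BUPU $\{\psi_i\}$ with $\supp\psi_i\subseteq g_iU_\epsilon$, set
$$\lambda_i(F)=\int_\grG\psi_i(g)F(g)\,dg,$$
and define $T_2$ on $Y_\psi$ by $T_2F=\sum_i\lambda_i(F)\,\ell_{g_i}W_\psi(\psi)$. The point of departure is the identity $W_\psi(\pi(g_i)\psi)=\ell_{g_i}W_\psi(\psi)$, which holds because $\pi^*$ extends $\pi$ on the Gelfand triple and $W_\psi(\psi)(g)=\dup{\psi}{\pi(g)\psi}$; together with the reproducing relation $W_\psi(\psi)*W_\psi(\psi)=W_\psi(\psi)$ this shows $T_2$ maps $Y_\psi$ into $Y_\psi$ and is the wavelet-side model of the synthesis map $F\mapsto\sum_i\lambda_i(F)\pi(g_i)\psi$.

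Next I would show that $T_2$ is close to the identity on $Y_\psi$ once $\epsilon$ is small. For $F\in Y_\psi$ I use $F=F*W_\psi(\psi)$ and $\sum_i\psi_i\equiv1$ to write
$$(\mathrm{Id}-T_2)F(h)=\sum_i\int\psi_i(g)F(g)\bigl(\ell_gW_\psi(\psi)(h)-\ell_{g_i}W_\psi(\psi)(h)\bigr)\,dg,$$
so that on $\supp\psi_i$ the bracket is an oscillation of the kernel $W_\psi(\psi)$ under left translation by an element of $U_\epsilon$. The mechanism controlling this oscillation is the intertwining $\frac{d}{dt}\Big|_{t=0}\ell_{e^{tX}}W_\psi(f)=W_\psi(\pi^*(X)f)$, which I would verify directly from the definitions of $\pi^*$-weak differentiability and of $W_\psi$. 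Expanding $u=e^{t_1X_1}\cdots e^{t_nX_n}\in U_\epsilon$ by the fundamental theorem of calculus along its factors and iterating produces a pointwise bound
$$\bigl|(\mathrm{Id}-T_2)F\bigr|\le C(\epsilon)\sum_{1\le|\alpha|\le\dim\grG}|F|*\bigl|W_\psi(\pi^*(D^\alpha)\psi)\bigr|,\qquad C(\epsilon)\to0\text{ as }\epsilon\to0,$$
where the appearance of derivatives up to order $\dim\grG$ is exactly what the product-of-exponentials path forces. By solidity and the hypothesis that each $F\mapsto F*|W_\psi(\pi^*(D^\alpha)\psi)|$ is bounded on $Y$, this yields $\|\mathrm{Id}-T_2\|_{Y_\psi\to Y_\psi}\le C'(\epsilon)$. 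I expect this oscillation estimate---establishing the pointwise Taylor bound and transferring it to a $Y$-norm bound---to be the main obstacle, since it is where the smoothness of $\psi$ and the convolution hypotheses are actually consumed.

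With the estimate in hand I would choose $\epsilon$ so that $\|\mathrm{Id}-T_2\|<1$; then $T_2$ is invertible on $Y_\psi$ by the Neumann series. Given $f\in\Co_S^\psi Y$, put $F=W_\psi(f)\in Y_\psi$ and apply $T_2$ to $T_2^{-1}F$ to get
$$W_\psi(f)=\sum_i\lambda_i(T_2^{-1}W_\psi(f))\,\ell_{g_i}W_\psi(\psi)=W_\psi\Bigl(\sum_i\lambda_i(T_2^{-1}W_\psi(f))\,\pi(g_i)\psi\Bigr),$$
and, since $W_\psi:\Co_S^\psi Y\to Y_\psi$ is an isometric isomorphism by Theorem~\ref{thm:coorbitsduality}(\ref{prop6}), invert it to obtain the stated expansion. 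Finally I would settle the two convergence modes: the partial sums always converge in $S^*$ because the discrete synthesis map $G\mapsto\sum_i\lambda_i(G)\pi(g_i)\psi$ is weak*-continuous into $S^*$, which follows from the continuity hypothesis of Theorem~\ref{thm:coorbitsduality} together with the BUPU structure; while norm convergence in $\Co_S^\psi Y$---equivalently $Y$-norm convergence of the partial sums of $T_2(T_2^{-1}F)$---holds when $C_c(\grG)$ is dense in $Y$, because then finitely supported sequences are dense in $Y^\#$ and the coefficient sequence $\{\lambda_i(T_2^{-1}W_\psi(f))\}$ lies in $Y^\#$.
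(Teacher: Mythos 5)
Your overall architecture coincides with the one the paper relies on: the paper itself gives no proof of this theorem (it only defines $T_2$ and $\lambda_i$ and defers to Gr\"ochenig \cite{Grochenig1991} and to \cite{Christensen2011,Christensen2012}), and those proofs indeed proceed exactly as you outline: BUPU functionals, the identity $\ell_{g_i}W_\psi(\psi)=W_\psi(\pi^*(g_i)\psi)$, a Neumann-series inversion of $T_2$ on $Y_\psi$, and transport through the isometry $W_\psi$. The gap is at the step you yourself flag as the main obstacle, and it is genuine. Write $\Phi=W_\psi(\psi)$ and $u=e^{t_1X_1}\cdots e^{t_nX_n}\in U_\epsilon$. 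Telescoping $\pi^*(u^{-1})\psi-\psi$ factor by factor and applying the fundamental theorem of calculus gives, for $y=g^{-1}h$,
\begin{equation*}
  |\Phi(uy)-\Phi(y)|\;\leq\;\sum_k |t_k|\,\sup_{w\in \widetilde{U}_\epsilon}\bigl|W_\psi(\pi^*(X_k)\psi)(wy)\bigr|,
\end{equation*}
that is, the derivative kernels appear evaluated at points \emph{perturbed} by elements of a compact neighbourhood of $e$ --- a local maximal function of $W_\psi(\pi^*(X_k)\psi)$ --- and not at $y$ itself. Iterating the expansion raises the order of differentiation but the perturbation persists at the top order $|\alpha|=\dim\grG$, since one cannot differentiate further; the iteration never closes pointwise. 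Hence the clean bound $|(\mathrm{Id}-T_2)F|\leq C(\epsilon)\sum_{\alpha}|F|*|W_\psi(\pi^*(D^\alpha)\psi)|$ is not produced by the argument you describe, and no other justification is offered for it.

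A telling symptom is that your proof never invokes the hypotheses that $Y$ is right invariant with continuous right translations, yet these appear in the statement precisely because they are consumed in repairing this step. The repair is a norm estimate, not a pointwise one: one converts the supremum over perturbations into an integral of derivatives up to order $\dim\grG$ over $U_\epsilon$ (this local Sobolev-type embedding is where order $\dim\grG$ is really needed, not merely the length of the product of exponentials), applies Fubini so that the perturbing element $w$ is constant in the inner integral, and then substitutes $g\mapsto gw^{-1}$ to obtain terms of the form $\Delta(w)\,(\rho_w|F|)*|W_\psi(\pi^*(D^\alpha)\psi)|$, where $\rho_w$ is right translation and $\Delta$ the modular function; solidity, the uniform bound $\sup_{w}\|\rho_w\|_{Y\to Y}<\infty$ over a compact set, and the assumed convolution estimates then yield $\|(\mathrm{Id}-T_2)F\|_Y\leq C(\epsilon)\|F\|_Y$ with $C(\epsilon)\to 0$. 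This is the content of the corresponding lemmas in \cite{Grochenig1991} and \cite{Christensen2012}; without it your Neumann-series step has no foundation. The remaining parts of your outline (that $T_2$ maps into $Y_\psi$, the inversion and application of $W_\psi^{-1}$, and the two convergence claims) are correct in substance, though the convergence in $S^*$ should be argued by dominated convergence against $\dup{\pi^*(g)\psi}{\phi}$ via the continuity hypothesis of Theorem~\ref{thm:coorbitsduality} rather than by an appeal to weak$^*$ continuity of the synthesis map.
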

The operator $T_2:Y_\psi\mapsto Y_\psi$ (also introduced in \cite{Grochenig1991})
is defined by
\begin{equation*}
  T_2 F = \sum_i \lambda_i(F)\ell_{g_i}W_\psi(\psi),
\end{equation*}
where $\lambda_i(F) = \int F(g)\psi_i(g)\,dg$.



\section{Besov spaces on symmetric cones}

\subsection{Symmetric cones}
For an introduction to symmetric cones we refer to the book
\cite{Faraut1994}.
Let $V$ be a Euclidean vector space over the real numbers of finite
dimension $n$.
A subset $\cone$ of $V$ is a cone if $\lambda\cone \subseteq \cone$
for all $\lambda>0$. Assume $\cone$ is open and convex, and
define the open dual cone $\cone^*$ by
\begin{equation*}
  \cone^* = \{y\in V \mid \ip{x}{y} > 0\,\, 
  \text{for all non-zero $x\in \overline{\cone}$} \}.
\end{equation*}
The cone $\cone$ is called symmetric if $\cone=\cone^*$ and
the automorphism group
\begin{equation*}
  \grG(\cone) = \{ g\in\mathrm{GL}(V)\mid g\cone=\cone   \}
\end{equation*}
acts transitively on $\cone$. 
In this case the set of adjoints of elements in $\grG(\cone)$
is $\grG(\cone)$ itself, i.e. $\grG(\cone)^*=\grG(\cone)$.
Define the characteristic function of $\cone$ by
\begin{equation*}
  \varphi(x) = \int_{\cone^*} e^{-\ip{x}{y}}\,dy,
\end{equation*}
then 
\begin{equation*}
  \varphi(gx) = |\det(g)|^{-1}\varphi(x).
\end{equation*}
Also,
\begin{equation}
  \label{eq:invintcone}
  f\mapsto \int_\cone f(x)\varphi(x)\,dx
\end{equation}
defines a $\grG(\cone)$-invariant measure on $\cone$.
The connected component $\grG_0(\cone)$ of $\grG(\cone)$
has Iwasawa decomposition
\begin{equation*}
  \grG_0(\cone)= \grK \grA \grN
\end{equation*}
where $\grK = \grG_0(\cone)\cap \mathrm{O}(V)$ is compact, 
$\grA$ is abelian and $\grN$ is nilpotent.
The unique fixed
point in $\cone$ for the mapping $x\mapsto \nabla \log\varphi(x)$
will be denoted $e$, and we note that $\grK$ fixes $e$.
The connected solvable subgroup $\grH = \grA \grN$ of $\grG_0(\cone)$ 
acts simply transitively on $\cone$ and the integral
(\ref{eq:invintcone}) thus also
defines the left-Haar measure on $\grH$. 
Throughout this paper we will identify
functions on $\grH$ and $\cone$ 
by right-$\grK$-invariant functions on $\grG_0(\cone)$.
If $F$ is a right-$\grK$-invariant function on $\grG$
and we denote by $f$ the corresponding function
on the cone $\cone$, then
\begin{equation*}
  F \mapsto \int_\grH F(h)\,dh 
  := \int_\cone f(x)\varphi(x)\,dx
\end{equation*}
gives an integral formula for the left-Haar measure on $\grH$
which we will denote by $dh$ or $\mu_\grH$. 
\begin{lemma}
  \label{lem:newhaar}
  If $F$ is an $\mu_\grH$-integrable 
  right-$\grK$-invariant function 
  on $\grG_0(\cone)$, then 
  there is a constant $C$ such that
  \begin{equation*}
    \int F(h)\,dh = C \int F((h^*)^{-1}) \,dh.
  \end{equation*}
  Here $h^*$ denotes the adjoint element of $h$ with respect
  to the inner product on $V$.
\end{lemma}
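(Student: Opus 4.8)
The plan is to transport both sides of the identity to the cone $\cone$ and to recognize the map $h\mapsto (h^*)^{-1}$ as the geometric inversion $x\mapsto x^{-1}$ on $\cone$. First I would record that $\sigma(g):=(g^*)^{-1}$ is an automorphism of $\grG_0(\cone)$: it is a homomorphism because $(gh)^*=h^*g^*$, it maps $\grG(\cone)$ into itself since $\grG(\cone)^*=\grG(\cone)$ and $\grG(\cone)$ is a group, and being a continuous involution fixing the identity it is a homeomorphism of $\grG(\cone)$ and hence preserves the connected component $\grG_0(\cone)$. Moreover $\sigma$ fixes $\grK$ pointwise, because every $k\in\grK\subseteq\mathrm{O}(V)$ satisfies $k^*=k^{-1}$. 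Since $F$ is right-$\grK$-invariant and $\grK$ fixes $e$, after the decomposition $\grG_0(\cone)=\grH\grK$ I would write $F(g)=f(g\cdot e)$ for the associated function $f$ on $\cone$, valid for every $g\in\grG_0(\cone)$.

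The second step is the key identification. The geodesic symmetry of the symmetric space $\grG_0(\cone)/\grK\cong\cone$ at the base point $e$ is the Jordan inversion $x\mapsto x^{-1}$, and it satisfies the intertwining relation $(g\cdot e)^{-1}=\sigma(g)\cdot e$ for all $g\in\grG_0(\cone)$, equivalently $(gx)^{-1}=\sigma(g)\,x^{-1}$. Applying this with $g=h\in\grH$ gives $F((h^*)^{-1})=F(\sigma(h))=f(\sigma(h)\cdot e)=f((h\cdot e)^{-1})$, where the middle equality uses right-$\grK$-invariance (legitimate because $\sigma(h)\in\grG_0(\cone)$, even though $\sigma(h)$ need not lie in $\grH$). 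Using the identification $\int_\grH G(h)\,dh=\int_\cone g(x)\varphi(x)\,dx$ for functions pulled back from $\cone$ via $h\mapsto h\cdot e$, the two integrals in the statement become
\[
\int F(h)\,dh=\int_\cone f(x)\varphi(x)\,dx,\qquad \int F((h^*)^{-1})\,dh=\int_\cone f(x^{-1})\varphi(x)\,dx.
\]
Thus the lemma reduces to showing that $\varphi(x)\,dx$ is, up to a positive constant, invariant under the inversion $j(x)=x^{-1}$.

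For this final step I would argue softly rather than compute Jacobians. Let $\nu=j_*(\varphi\,dx)$ be the pushforward, so $\int f\,d\nu=\int_\cone f(x^{-1})\varphi(x)\,dx$. Given $g\in\grG_0(\cone)$ put $h=\sigma(g)$; since $\sigma$ is involutive, $(hx)^{-1}=\sigma(h)x^{-1}=g\cdot x^{-1}$, so the substitution $u=hx$ together with the $\grG_0(\cone)$-invariance of $\varphi\,dx$ yields $\int f(g\cdot x^{-1})\varphi(x)\,dx=\int f(u^{-1})\varphi(u)\,du$; that is, $\nu$ is again $\grG_0(\cone)$-invariant. Because $\cone\cong\grG_0(\cone)/\grK$ with $\grK$ compact, the invariant measure is unique up to a scalar, whence $\nu=C^{-1}\varphi\,dx$ for some $C>0$, which is exactly the claim. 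If one wants the value, the standard Jordan-algebra formulas $\varphi(x)=\varphi(e)\Delta(x)^{-n/r}$, $\Delta(x^{-1})=\Delta(x)^{-1}$ and $|\det Dj(x)|=\det P(x)^{-1}=\Delta(x)^{-2n/r}$ give $C=1$.

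The main obstacle I anticipate is not the measure-theoretic uniqueness but the careful bookkeeping in the reduction to the cone: one must justify evaluating $F((h^*)^{-1})$ through right-$\grK$-invariance although $(h^*)^{-1}$ generally leaves $\grH$, and one must have the intertwining identity $(g\cdot e)^{-1}=\sigma(g)\cdot e$ available in the present normalization, where $e$ is singled out as the fixed point of $x\mapsto\nabla\log\varphi(x)$. Checking that this $e$ is indeed the Jordan identity, so that $x\mapsto x^{-1}$ is precisely the symmetry fixing $e$, is the one place where the Jordan-theoretic structure of $\cone$ must genuinely be invoked.
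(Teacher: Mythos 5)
Your proof is correct, but it takes a genuinely different route from the paper's. The paper never leaves the group side and never invokes Jordan inversion: it notes that $G(h)=F((h^*)^{-1})$ is again right-$\grK$-invariant, hence identifiable with a function on $\cone$, and that for $g\in\grH$ one has $(\ell_g F)((h^*)^{-1})=F(((g^*h)^*)^{-1})=G(g^*h)$; since $\varphi(x)\,dx$ is invariant under all of $\grG_0(\cone)$ --- in particular under $g^*$, which generally lies outside $\grH$ --- the functional $F\mapsto\int_\grH F((h^*)^{-1})\,dh$ is a left-invariant positive functional on $\grH$, and uniqueness of left Haar measure on $\grH$ produces the constant $C$. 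You instead transport everything to the cone, identify $h\mapsto(h^*)^{-1}$ with the Jordan inversion $x\mapsto x^{-1}$ via the intertwining relation $(gx)^{-1}=\sigma(g)x^{-1}$, show the pushforward of $\varphi(x)\,dx$ under inversion is again $\grG_0(\cone)$-invariant, and appeal to uniqueness of the invariant measure on the homogeneous space $\grG_0(\cone)/\grK$. Both arguments rest on the same two pillars ($\grG_0(\cone)$-invariance of $\varphi(x)\,dx$ and a uniqueness-of-invariant-measure statement), but yours genuinely needs the Jordan-theoretic inputs you flag at the end --- that $e$ is the Jordan identity, that inversion is the symmetry at $e$ satisfying the intertwining relation, and (for the constant) the Jacobian formula $|\det Dj(x)|=\Delta(x)^{-2n/r}$ --- all standard in Faraut--Kor\'anyi but entirely avoided by the paper. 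What your heavier route buys is the sharper conclusion $C=1$, which the paper's soft Haar-measure argument does not yield and does not need.
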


\begin{proof}
  Without loss of generality 
  we will assume that $F$ is compactly supported.
  Note first that the function $h\mapsto F((h^*)^{-1})$ is 
  right-$\grK$-invariant and therefore can be 
  identified with a function on $\cone$.
  Since the measure $\varphi(x)\,dx$ on $\cone$
  is $\grG_0(\cone)$-invariant, the measure
  on $\grH$ is also $\grG_0(\cone)$-invariant.
  For $g\in \grG_0(\cone)$ we have that
  $\ell_gF((h^*)^{-1}) = F(((g^*h)^*)^{-1})$, 
  and therefore 
  the mapping
  \begin{equation*}
    F \mapsto \int_\grH F((h^*)^{-1}) \,dh
  \end{equation*}
  defines a left-invariant measure on $\grH$.
  By uniqueness of Haar measure we conclude that
  \begin{equation*}
    \int_\grH F((h^*)^{-1})\,dh = C \int_\grH F(h)\,dh.
  \end{equation*}
\end{proof}

For $f\in L^1(V)$ the Fourier transform is defined by
\begin{equation*}
  \widehat{f}(w) = \frac{1}{(2\pi)^{n/2}}\int_V f(x)e^{-i\ip{x}{w}}\,dx
  \text{\ for $w\in V,$}
\end{equation*}
and it extends to an unitary operator on $L^2(V)$ in the usual way.
Denote by $\mathcal{S}(V)$ the space of rapidly decreasing
smooth functions with topology induced by the semi-norms
\begin{equation*}
  \| f\|_{k} 
  = \sup_{|\alpha|\leq k} \sup_{x\in V} |\pdif^{\alpha} f (x)|(1+|x|)^k.
\end{equation*}
Here $\alpha$ is a multi-index, $\partial^\alpha$ denotes usual partial
derivatives of functions,  and $k\geq 0$ is an integer.
The convolution 
$$
f*g(x) = \int_V f(y)g(x-y)\,dy
$$
of functions $f,g\in \mathcal{S}(V)$ satisfies
\begin{equation*}
  \widehat{f*g}(w) = \widehat{f}(w)\widehat{g}(w).
\end{equation*}
The space $\mathcal{S}'(V)$ of tempered distributions
is the linear dual of $\mathcal{S}(V)$.
For functions on $V$ define
$\tau_x f(y) = f(y-x)$, $f^\vee(y) = f(-y)$ and
$f^*(y)=\overline{f(-y)}$.
Convolution of $f\in \mathcal{S}'(V)$ and $\phi\in\mathcal{S}(V)$
is defined by 
$$
f*\phi(x) = f(\tau_x\phi^\vee).
$$
The space of rapidly decreasing smooth
functions with Fourier transform vanishing on $\cone$ is
denoted $\mathcal{S}_\cone$. It is a closed subspace of $\mathcal{S}(V)$
and will be equipped with the subspace topology.

The space $V$ can be equipped with a Jordan algebra structure
such that $\overline{\cone}$ is identified with the set of all
squares. This gives rise to the notion of a determinant
$\Delta(x)$. We only need the fact that the determinant
is related to the characteristic function $\varphi$ by
\begin{equation*}
  \varphi(x) = \varphi(e)\Delta(x)^{-n/r},
\end{equation*}
where $r$ denotes the rank of the cone.
If $x=ge$ we have
\begin{equation} \label{eq:determinantrelation}
  \Delta(x) = \Delta(ge) = |\mathrm{Det}(g)|^{r/n}.
\end{equation}

The following growth estimates hold for functions in $\mathcal{S}_\cone$
(see Lemma 3.11 in \cite{Bekolle2004}):
\begin{lemma}
  \label{lem:rapidoncone}
  If $\phi\in \mathcal{S}_\cone$ and $k,l$ are non-negative 
  integers, then there is an $N=N(k,l)$ and a constant $C_{N}$ such that
  \begin{equation*}
    |\widehat{\phi}(w)| 
    \leq C_{N}\| \widehat{\phi}\|_{N} \frac{\Delta(w)^{l}}{(1+|w|)^k}.
  \end{equation*}
\end{lemma}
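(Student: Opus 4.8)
The plan is to work entirely on the Fourier side and to convert the boundary behaviour of $g:=\widehat\phi$ into the factor $\Delta(w)^l$ by a Taylor expansion, handling the factor $(1+|w|)^{-k}$ separately through the seminorms $\|\cdot\|_N$. First I would isolate the property of $\mathcal{S}_\cone$ that is actually used: $g$ is a smooth function supported in $\overline\cone$, so $g$ together with all of its derivatives vanishes at every boundary point (approach such a point through the open set where $g\equiv0$). For $w\notin\overline\cone$ the claimed inequality is trivial since $g(w)=0$, and for $w\in\partial\cone$ both sides vanish because $\Delta(w)=0$; hence it suffices to prove the estimate for $w\in\cone$, where $\Delta(w)>0$.

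The decisive ingredient will be a comparison between the Euclidean distance to the boundary, $d(w):=\mathrm{dist}(w,\partial\cone)$, and the determinant. Using the spectral theorem of the Euclidean Jordan algebra attached to $\cone$ (see \cite{Faraut1994}) I would write $w=\sum_{i=1}^r\lambda_i c_i$ with a Jordan frame $c_1,\dots,c_r$ and eigenvalues $\lambda_i=\lambda_i(w)$, so that $\Delta(w)=\prod_{i=1}^r\lambda_i$ and $w\in\cone$ precisely when all $\lambda_i>0$. Subtracting the smallest eigenvalue in its idempotent direction produces an element with a zero eigenvalue, hence a boundary point at Euclidean distance $\le C_0\min_i\lambda_i$ from $w$; this gives $d(w)\le C_0\min_i\lambda_i$ and therefore $d(w)^r\le C_0^{\,r}\prod_i\lambda_i=C_0^{\,r}\Delta(w)$ for all $w\in\cone$. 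I expect this determinant--distance comparison to be the main obstacle, since it is the only genuinely cone-specific step; once it is in place the remainder is Taylor's theorem and weight bookkeeping.

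Granting the comparison, the mechanism is as follows. Fix $w\in\cone$, let $w_0\in\partial\cone$ be a nearest boundary point, so $|w-w_0|=d(w)$ and, by convexity of $\cone$, the segment $[w_0,w]$ lies in $\overline\cone$. Since all derivatives of $g$ up to order $m-1$ vanish at $w_0$, Taylor's formula yields $|g(w)|\le C_m\,d(w)^m\max_{|\beta|=m}\sup_{y\in[w_0,w]}|D^\beta g(y)|$. I would take $m=rl$, so that $d(w)^{rl}\le C_0^{\,rl}\Delta(w)^l$, and bound the derivative supremum by the seminorm via $|D^\beta g(y)|\le\|g\|_{m+k}(1+|y|)^{-(m+k)}$. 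The remaining point is to move the weight from $y$ to $w$: when $d(w)\le|w|/2$ every $y$ on the segment satisfies $|y|\ge|w|-d(w)\ge|w|/2$, whence $1+|y|\ge\frac12(1+|w|)$, and the weighted bound collapses to $|g(w)|\le C\,\|\widehat\phi\|_{rl+k}\,\Delta(w)^l(1+|w|)^{-k}$.

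Finally I would assemble the estimate by splitting $\cone$ into three regions. On $\{|w|\le1\}$ the segment stays in a fixed ball, the weight is harmless, and the Taylor bound with $m=rl$ gives $|g(w)|\lesssim\|\widehat\phi\|_{rl}\,\Delta(w)^l$. On $\{|w|\ge1,\ d(w)\le|w|/2\}$ the previous paragraph applies verbatim. On $\{|w|\ge1,\ d(w)\ge|w|/2\}$ the comparison gives $\Delta(w)\ge c|w|^r\ge c'(1+|w|)^r$, so the claim follows from the plain decay $|g(w)|\le\|\widehat\phi\|_k(1+|w|)^{-k}$, since then $|g(w)|(1+|w|)^k/\Delta(w)^l\le\|\widehat\phi\|_k/(c'(1+|w|)^{rl})\le\|\widehat\phi\|_k/c'$. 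Taking $N=rl+k$ produces the constant $C_N$ and the stated inequality for every $w$.
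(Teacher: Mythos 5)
Your argument cannot be matched against a proof in the paper for a simple reason: the paper never proves this lemma, it imports it wholesale from the literature, citing Lemma 3.11 of \cite{Bekolle2004} in the sentence immediately preceding the statement. Measured against that, your blind proof is correct and essentially reconstructs the standard mechanism behind the cited result: (i) a smooth function $g=\widehat\phi$ supported in $\overline{\cone}$ has all derivatives vanishing on $\partial\cone$, since every boundary point of a convex set is a limit of exterior points; (ii) Taylor expansion of order $m$ at a nearest boundary point bounds $|g(w)|$ by $d(w)^m$ times an $m$-th order derivative supremum, which the Schwartz seminorms control with weight $(1+|y|)^{-(m+k)}$; (iii) the only cone-specific input is the comparison $d(w)\le C_0\min_i\lambda_i(w)$, obtained from the spectral theorem by deleting the smallest eigenvalue, and this single inequality feeds both regimes — it gives $d(w)^r\le C_0^r\Delta(w)$ for the Taylor step with $m=rl$, and $\Delta(w)\ge (d(w)/C_0)^r\gtrsim(1+|w|)^r$ in the region $d(w)\ge|w|/2$, where plain Schwartz decay suffices. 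The three-region split, the weight transfer $1+|y|\ge\tfrac12(1+|w|)$ along the segment, and the choice $N=rl+k$ are all sound. Two small points deserve to be made explicit. First, your constant $C_0$ tacitly uses that the set of primitive idempotents is bounded; this is true (in the trace inner product they all have norm one, and all norms on $V$ are equivalent) but should be said. Second, your claim that the inequality is ``trivial'' for $w\notin\overline\cone$ assumes $\Delta(w)^l\ge0$ there, which can fail for odd $l$ because the Jordan determinant is a polynomial that may be negative outside the cone; like its source in \cite{Bekolle2004}, the estimate should be read as a statement for $w\in\overline\cone$, where $\widehat\phi$ actually lives, and on that set your proof is complete.
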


\subsection{Besov spaces on symmetric cones}
The cone $\cone$ can be identified as a Riemannian manifold
$\cone = \grG_0(\cone)/\grK$ where $\grK$ is the compact
group fixing $e$. The Riemannian metric
in this case is defined by
\begin{equation*}
  \dup{u}{v}_y = \ip{g^{-1}u}{g^{-1}v}
\end{equation*}
for $u,v$ tangent vectors to $\cone$ at $y=ge$. Denote the balls
of radius $\delta$ centered at $x$ by $B_\delta(x)$. For $\delta>0$ and 
$R\geq 2$ the points $\{ x_j\}$ are called a $(\delta,R)$-lattice if
\begin{enumerate}
\item $\{ B_\delta(x_j)\}$ are disjoint, and
\item $\{ B_{R\delta}(x_j)\}$ cover $\cone$.
\end{enumerate}

We now fix a $(\delta,R)$-lattice $\{ x_j\}$ with $\delta=1/2$ and $R=2$.
Then there are functions $\psi_j\in\mathcal{S}_\cone$,
such that $0\leq\widehat{\psi}_j\leq 1$, $\mathrm{supp}(\widehat{\psi}_j) \subseteq B_2(x_j)$,
$\widehat{\psi}_j$ is one on $B_{1/2}(x_j)$ and $\sum_j \widehat{\psi}_j =1$ on $\cone$.
Using this decomposition of the cone, the Besov space norm for $1\leq p,q<\infty$ and $s\in \mathbb{R}$ is
defined in \cite{Bekolle2004} by
\begin{equation*}
  \| f\|_{\besov^{p,q}_s} = \left( \sum_j \Delta(x_j)^{-s}\| f*\psi_j\|_p^q \right)^{1/q}.
\end{equation*}
The Besov space $\besov^{p,q}_s$ consists of 
the equivalence classes of tempered distributions $f$ in
$\mathcal{S}_\cone' \simeq 
\{ f\in \mathcal{S}'(V) \mid \supp(\widehat{f})\subseteq \overline{\cone}\}
/\mathcal{S}'_{\partial \cone}$ 
for which $\| f\|_{\besov^{p,q}_s} <\infty$.
\begin{theorem}
  \label{thm:normequiv}
  Let $\psi$ be a function in $\mathcal{S}_\cone$ for which 
  $1_{B_{1/2}(e)} \leq \widehat{\psi}\leq 1_{B_2(e)}$.
  Defining $\psi_h$ by
  \begin{equation*}
    \widehat{\psi}_h(w) = \widehat{\psi}(h^{-1}w),
  \end{equation*}
  then
  \begin{equation*}
    \| f\|_{\besov^{p,q}_s} \simeq \left( \int_\grH \| f*\psi_h \|_p^q \mathrm{Det}(h)^{-sr/n}
      \,dh \right)^{1/q}
  \end{equation*}
  for $f\in \mathcal{S}_\cone^*$.
\end{theorem}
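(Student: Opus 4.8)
The plan is to prove the two-sided estimate by comparing the continuous average over $\grH$ with the discrete sum over the lattice, exploiting that both the weight and the frequency supports transform covariantly under $\grH$. Writing $x_j = h_j e$ with $h_j\in\grH$, relation (\ref{eq:determinantrelation}) identifies the discrete weight as $\Delta(x_j)^{-s} = \mathrm{Det}(h_j)^{-sr/n}$, so the sampled weight agrees with the continuous weight $w(h):=\mathrm{Det}(h)^{-sr/n}$ at $h=h_j$. Two structural facts drive everything. First, since the Riemannian metric is $\grG_0(\cone)$-invariant, every $h\in\grH$ acts as an isometry, so $hB_\delta(x)=B_\delta(hx)$; consequently $\widehat{\psi}_h$ is supported in $B_2(he)$ and equals $1$ on $B_{1/2}(he)$, and in particular $\psi_{h_j}$ has exactly the support properties of $\psi_j$. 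Second, from $\widehat{\psi}_h(w)=\widehat{\psi}(h^{-1}w)$ one computes $\psi_h(x)=|\det h|\,\psi(h^*x)$, whence $\|\psi_h\|_1=\|\psi\|_1$ is independent of $h$. Finally I will use the bounded-overlap property of the $(\delta,R)$-lattice: the number of $j$ with $B_2(x_j)\cap B_2(he)\neq\emptyset$ is bounded uniformly in $h$.

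For the bound (continuous $\lesssim$ discrete) I would use the partition of unity. Since $\sum_j\widehat{\psi}_j=1$ on $\cone$ and $\widehat{\psi}_h$ is supported in $B_2(he)$, only the indices $j\in J(h):=\{j: B_2(x_j)\cap B_2(he)\neq\emptyset\}$ contribute, and $|J(h)|\le C$. Hence $f*\psi_h=\sum_{j\in J(h)}(f*\psi_j)*\psi_h$, and Young's inequality together with $\|\psi_h\|_1=\|\psi\|_1$ and the power-mean inequality give $\|f*\psi_h\|_p^q\lesssim\sum_{j\in J(h)}\|f*\psi_j\|_p^q$. Integrating against $w$ and interchanging sum and integral reduces matters to the weight estimate $\int_{\{h:\,j\in J(h)\}}w(h)\,dh\lesssim w(h_j)$. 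The set in question is contained in $\{h:he\in B_4(x_j)\}=h_jB$, where $B=\{h':h'e\in B_4(e)\}$ is a fixed relatively compact subset of $\grH$; left-invariance of Haar measure together with the cocycle identity $\mathrm{Det}(h_jh')=\mathrm{Det}(h_j)\mathrm{Det}(h')$ then yield the bound with finite constant $\int_B w(h')\,dh'$.

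For the reverse bound (discrete $\lesssim$ continuous) I would manufacture a Calder\'on reproducing formula from $\psi$. The function $w\mapsto\int_\grH|\widehat{\psi}(h^{-1}w)|^2\,dh$ is $\grH$-invariant by left-invariance of $dh$, hence constant on the orbit $\cone$; call the constant $c_\psi$, which is finite and strictly positive because $\widehat{\psi}$ is squeezed between $1_{B_{1/2}(e)}$ and $1_{B_2(e)}$. Defining $g_h$ by $\widehat{g_h}=\overline{\widehat{\psi}_h}$, this gives in $\mathcal{S}_\cone^*$ the identity $f=c_\psi^{-1}\int_\grH f*\psi_h*g_h\,dh$. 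Convolving with $\psi_j$ and noting that $\widehat{g_h}\,\widehat{\psi}_j$ is supported in $B_2(he)\cap B_2(x_j)$, only $h\in h_jB$ contribute, so $f*\psi_j=c_\psi^{-1}\int_{h_jB}f*\psi_h*g_h*\psi_j\,dh$. Young's inequality bounds $\|f*\psi_j\|_p$ by $c_\psi^{-1}\int_{h_jB}\|f*\psi_h\|_p\,\|g_h*\psi_j\|_1\,dh$, and the key point is that $\|g_h*\psi_j\|_1\le C$ uniformly in $j$, which follows by undoing the $h_j$-dilation: writing $h=h_ju$ with $u\in B$, the function $g_h*\psi_j$ is the $h_j$-dilate of a member of a fixed bounded family in $\mathcal{S}(V)$, so its $L^1$-norm is scale invariant. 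Raising to the $q$-th power via Jensen, summing against $w(h_j)$, using $w(h_j)\le C\,w(h)$ for $h\in h_jB$, and invoking bounded overlap $\sum_j 1_{h_jB}\le C$ then closes the estimate.

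The main obstacle is keeping every constant uniform in $j$, and this is exactly where the homogeneity of the cone is used: the covariance $hB_\delta(x)=B_\delta(hx)$, the scale invariance of the relevant $L^1$-norms, and the cocycle behaviour of $\mathrm{Det}$ are what convert the single fixed bump $\psi$ into uniform bounds across all lattice cells. A secondary technical point is the rigorous justification of the reproducing formula and of the interchange of summation and integration in $\mathcal{S}_\cone^*$; here the rapid-decay estimates of Lemma \ref{lem:rapidoncone} guarantee absolute convergence. The uniform bound $\|g_h*\psi_j\|_1\le C$ also implicitly requires that the $\psi_j$ be (essentially) $\grH$-dilates of a fixed profile, as in the Bekoll\'e--Bonami--Garrig\'os--Ricci construction; verifying that the chosen partition of unity has this form, or else bypassing it via Lemma \ref{lem:rapidoncone} and Young's inequality with scale-covariant kernels, is the remaining point requiring care.
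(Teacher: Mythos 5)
Your proposal is correct in substance, but it follows a genuinely different route from the paper's proof, and in fact it is more complete. For the direction (continuous $\lesssim$ discrete), the paper covers $\grH$ by the lattice cells $h_jU$, freezes the weight via $\det(h)\sim\det(h_j)$, dominates $\widehat{\psi}_h$ for $h\in h_jU$ by a finite sum $\sum_{i\in I}(\widehat{\psi}_i)_{h_j}$, and then cites Lemma 3.8 of Bekoll\'e--Bonami--Garrig\'os--Ricci to recognize $\sum_j\|f*(\psi_i)_{h_j}\|_p^q\det(h_j)^{-sr/n}$ as an equivalent Besov norm; you instead apply the partition of unity $\sum_j\widehat{\psi}_j=1$ directly to $\widehat{\psi}_h$ and replace the citation by the elementary weight estimate $\int_{h_jB}\mathrm{Det}(h)^{-sr/n}\,dh=C\,\mathrm{Det}(h_j)^{-sr/n}$ (left invariance of Haar measure plus multiplicativity of $\mathrm{Det}$) together with bounded overlap of the lattice, which makes this half self-contained. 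For the reverse direction the paper says only that it ``can be obtained in a similar fashion,'' whereas you supply an explicit mechanism: the Calder\'on-type identity $\int_\grH|\widehat{\psi}(h^{-1}w)|^2\,dh=c_\psi$ on $\cone$ (the continuous analogue of the normalization appearing in Lemma~\ref{lemma:reprod}, via the change of variable of Lemma~\ref{lem:newhaar}), followed by Young's inequality and the same overlap and weight estimates. The one genuine point of care, which you flag yourself, is the uniform bound $\sup_j\|\psi_j\|_1<\infty$: it does \emph{not} follow from the properties of the $\psi_j$ as listed in the paper (pointwise bounds, supports, and summing to one give no control of $\|\psi_j\|_1$), but it does hold for the BBGR construction, where $\widehat{\psi}_j(w)=\widehat{\eta}(h_j^{-1}w)/\sum_k\widehat{\eta}(h_k^{-1}w)$ exhibits $\psi_j$ as the $h_j$-dilate of a member of a bounded family in $\mathcal{S}(V)$, so that $\|\psi_j\|_1$ is dilation-invariant and uniformly bounded; since the paper's own appeal to BBGR Lemma 3.8 rests on the same hidden uniformity, this is a dependency you should state explicitly rather than a flaw in your argument.
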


\begin{proof}

Before we prove the theorem, let us note that
\begin{equation*}
  (\psi_h)_g = \psi_{gh}.
\end{equation*}

The cover of $\cone$ corresponds to a cover
of $\grH$: if $h_j\in \grH$ is 
such that $x_j = h_je$ then $h_jU$ covers $\grH$ with
$U = \{ h\in \grH \mid he \in B_1(e)\}$. 
\begin{align*}
  \left( \int_\grH \| f*\psi_h \|_p^q \det(h)^{-sr/n} \,dh \right)^{1/q}
  &\leq \left( \sum_j \int_{h_jU} \| f*\psi_h \|_p^q \det(h)^{-sr/n} \,dh \right)^{1/q} \\
  &\leq C \left( \sum_j \int_{h_jU} \| f*\psi_h \|_p^q \det(h_j)^{-sr/n}
    \,dh \right)^{1/q}.
\end{align*}
In the last inequality we have used that, if $h\in h_jU$ then
$\det(h) \sim \det(h_j)$ uniformly in $j$. This follows since for 
$h\in h_jU$, $\det(h) = \det(h_j)\det(u)$ for some $u\in U$,
and since $U$ is bounded (compact) there is a $\gamma$ such that
$1/\gamma \leq \det(u) \leq \gamma$ uniformly in $j$.
For $h\in h_jU$ all the functions $\widehat{\psi}_{h_j^{-1}h}$
have compact support contained in a larger compact set.
Therefore there is an finite set $I$ such that
$$
\widehat{\psi}_{h_j^{-1}h} 
= \widehat{\psi}_{h_j^{-1}h} \sum_{i\in I} \widehat{\psi}_i.
$$
Then, for $h\in h_jU$ we get
$$
\widehat{\psi}_{h} 
= \widehat{\psi}_{h} \sum_{i\in I} (\widehat{\psi}_i)_{h_j},
$$
and, since $\psi_h\in L^1(V)$,
\begin{equation*}
  \| f*\psi_h \|_p 
  \leq \sum_{i\in I} \| f*(\psi_i)_{h_j} \|_p.
\end{equation*}
So we get
\begin{align*}
  \left( \int_\grH \| f*\psi_h \|_p^q \det(h)^{-sr/n} \,dh \right)^{1/q}
  &\leq C \sum_{i\in I}
  \left( \sum_j \| f*(\psi_i)_{h_j} \|_p^q \det(h_j)^{-sr/n} 
  \right)^{1/q} \\
  &\leq C \| f\|_{\besov_s^{p,q}}.
\end{align*}
In the last inequality 
we used that each of the collections $\{ (\psi_i)_{h_j}\}_j$ partitions the
frequency plane, and the expression can thus be estimated
by a Besov norm (see Lemma 3.8 in \cite{Bekolle2004}).

The opposite inequality can be obtained in a similar fashion.

\end{proof}

\section{A Wavelet Characterization of 
  Besov spaces on symmetric cones}
We will now show that the Besov spaces can be
characterized as coorbits for the group
$\grG=\grH\ltimes V$, with isomorphism given by
the mapping $f\mapsto \widetilde{f}$ from
$\mathcal{S}'_\Omega$ to $\mathcal{S}_\Omega^*$ defined via
$\dup{\widetilde{f}}{\phi} = f(\overline{\phi})$.
Notice that convolution $f*\phi$ can be expressed 
via the conjugate linear dual pairing as
\begin{equation*}
  f*\phi(x) = \dup{\widetilde{f}}{\tau_x\phi^*}.
\end{equation*}

\subsection{Wavelets and coorbits on symmetric cones}

The group of interest to us is the semidirect product
$\grG=\grH\ltimes V$ with group composition
\begin{equation*}
  (h,x)(h_1,x_1) = (hh_1,hx_1+x).
\end{equation*}
Here $\grH=\grA\grN$ is the connected solvable subgroup of the connected
component of the automorphism group on $\cone\subseteq V$.
If $dh$ denotes the left Haar measure on $\grH$ and $dx$ the Lebesgue
measure on $V$, then the left Haar measure on $\grG$ is given
by $\frac{dx\,dh}{\det(h)}.$

The quasi regular representation of this group on $L^2(V)$ is
given by
\begin{equation*}
  \pi(h,x) f(t) = \frac{1}{\sqrt{\mathrm{Det}(h)}} f(h^{-1}(t-x)),
\end{equation*}
and it is irreducible and square integrable on 
$L^2_\cone = \{f\in L^2(V)\mid \supp(\widehat{f})\subseteq\cone  \}$
(see \cite{Fuhr1996,Fabec2003}).
In frequency domain the representation becomes
\begin{equation*}
  \widehat{\pi}(h,x) \widehat{f}(w) = \sqrt{\det(h)}e^{-i\ip{x}{w}}\widehat{f}(h^*w).
\end{equation*}
By $\pi$ we will also denote the restriction of $\pi$ to $\mathcal{S}_\cone$.
\begin{remark}
  The norm equivalence we have shown in Theorem~\ref{thm:normequiv}
  is related to the unitary representation
  \begin{equation*}
    \rho(h,x) f(t) = {\sqrt{\mathrm{Det}(h^*)}} f(h^{*}(t-x))
  \end{equation*}
  and not the representation $\pi$. 
  However, Lemma~\ref{lem:newhaar} 
  allows us to make a change of variable $h\mapsto (h^{*})^{-1}$
  in order to relate the norm equivalence to $\pi$.
\end{remark}

\begin{lemma}
  \label{lemma:repn}
  The representation $\pi$ of $\grG$ on $\mathcal{S}_\cone$
  is continuous, and if $\psi$ is the function
  from from Theorem~\ref{thm:normequiv}, then the function $\phi=\psi^*$
  is a cyclic vector for $\pi$.
\end{lemma}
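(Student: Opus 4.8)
The statement has two parts: continuity of the representation $\pi$ on $\mathcal{S}_\cone$, and cyclicity of $\phi = \psi^*$. The plan is to handle continuity first by a direct estimate in the frequency domain, and then derive cyclicity from the reproducing property of the wavelet transform together with the support condition on $\widehat{\psi}$.

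For continuity, I would work with $\widehat{\pi}$, since the Fourier transform is a topological isomorphism of $\mathcal{S}(V)$ that carries $\mathcal{S}_\cone$ onto the subspace of Schwartz functions vanishing on $\cone$. The action $\widehat{\pi}(h,x)\widehat{f}(w) = \sqrt{\det(h)}\,e^{-i\ip{x}{w}}\widehat{f}(h^*w)$ is built from dilation by $h^*$, modulation by $x$, and a scalar normalization. Each operation is continuous on $\mathcal{S}(V)$ in the joint variables $(h,x,\widehat{f})$, so the main task is to verify that for a fixed $\widehat{f}$ the map $(h,x)\mapsto \widehat{\pi}(h,x)\widehat{f}$ is continuous into $\mathcal{S}(V)$ in the seminorms $\|\cdot\|_k$. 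First I would fix $(h_0,x_0)$ and estimate $\|\widehat{\pi}(h,x)\widehat{f} - \widehat{\pi}(h_0,x_0)\widehat{f}\|_k$, splitting into the modulation difference (controlled by $|x-x_0|$ via the Lipschitz bound on $e^{-i\ip{x}{w}}$ against rapid decay of $\widehat{f}$) and the dilation-plus-scalar difference (controlled by continuity of $h\mapsto h^*$ on the Lie group $\grH$, together with the fact that derivatives of $\widehat{f}\circ h^*$ and the factor $(1+|w|)^k$ transform with factors that stay uniformly bounded as $h$ ranges over a small neighborhood of $h_0$). Since $\grH$ acts by linear maps and $h\mapsto\det(h)$ is continuous and nonvanishing, these factors are harmless locally, and I expect the resulting estimate to be routine once the neighborhood is fixed.

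For cyclicity, I would use the analyzing/reproducing structure already in place. By construction $\widehat{\psi}$ is supported in $B_2(e)$ and equals $1$ on $B_{1/2}(e)$, so $\widehat{\phi} = \overline{\widehat{\psi}(-\,\cdot)}$ (up to the standard Fourier identity $\widehat{\psi^*} = \overline{\widehat{\psi}}$) is supported near $e$ and nonvanishing there. The dilated copies $\widehat{\phi}(h^*w)$, as $h$ ranges over $\grH$, have supports that sweep out all of $\cone$ because $\grH$ acts simply transitively on $\cone$; hence there is no nonempty open subset of $\cone$ on which every $\widehat{\pi}(h,x)\widehat{\phi}$ vanishes. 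Concretely, suppose $f\in\mathcal{S}_\cone^*$ satisfies $\dup{f}{\pi(h,x)\phi} = 0$ for all $(h,x)$. Unwinding the pairing through the Fourier transform, $\dup{f}{\pi(h,x)\phi}$ equals (a convolution of $\widetilde{f}$ with a dilate of $\phi$ evaluated at $x$, equivalently) the inverse Fourier transform of $\widehat{f}\,\overline{\widehat{\pi}(h,0)\widehat{\phi}}$ at the point $x$. Since this vanishes for all $x$, the product $\widehat{f}(w)\,\overline{\widehat{\phi}(h^*w)}$ vanishes as a distribution for every $h$. As $h$ varies, the factors $\overline{\widehat{\phi}(h^*w)}$ cover $\cone$ with nonvanishing values, forcing $\widehat{f} = 0$ on $\cone$, i.e. $f = 0$ in $\mathcal{S}_\cone^*$.

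The main obstacle I anticipate is the cyclicity argument, specifically making rigorous the passage from ``$\dup{f}{\pi(h,x)\phi}=0$ for all $(h,x)$'' to ``$\widehat{f}=0$ on $\cone$'' at the level of distributions rather than functions, since $f$ lies only in the dual $\mathcal{S}_\cone^*$. The cleanest route is to test against an arbitrary $\eta\in\mathcal{S}_\cone$: one shows that any such $\eta$ can be approximated in $\mathcal{S}_\cone$ by finite linear combinations of the vectors $\pi(h,x)\phi$, which reduces cyclicity to the statement that these vectors span a dense subspace. That density in turn follows from the fact that $\phi$ is a nonzero vector generating the irreducible square-integrable representation $\pi$ on $L^2_\cone$ (cited from \cite{Fuhr1996,Fabec2003}), so the reproducing formula $W_\phi(f)*W_\phi(\phi)=W_\phi(f)$ transfers irreducibility-type density from $L^2_\cone$ down to the Fréchet setting. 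I would keep the continuity estimate brief and concentrate the written proof on this density/support argument.
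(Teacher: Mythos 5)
Your continuity argument follows the same route as the paper's: pass to the frequency side via the Fourier transform, discard the determinant normalization, and estimate Schwartz seminorms of $\widehat{f}(h^*\cdot)e^{-i\ip{x}{\cdot}}-\widehat{f}$ by splitting the modulation difference from the dilation difference. You leave the estimate as ``routine,'' whereas this is where the paper spends almost all of its effort (mean value theorem along segments, the locally uniform bound on $(1+|w|)^N/(1+|h^*w|)^N$, the coefficients $c_\beta(h,x)\to 0$); but the ingredients you name are exactly the right ones, so this half is the same proof in outline.

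For cyclicity your primary argument is genuinely different from the paper's, and it is correct. The paper notes that $\dup{\widetilde{f}}{\pi(h,x)\phi}=f*\psi_{(h^{-1})^*}(x)$, feeds the vanishing of all these convolutions into the continuous norm equivalence (Theorem~\ref{thm:normequiv} together with Lemma~\ref{lem:newhaar}) to get $\|f\|_{\besov^{p,q}_s}=0$, and then cites Lemmas 3.11 and 3.22 of \cite{Bekolle2004} to conclude $f=0$ in $\mathcal{S}_\cone'$. You instead localize on the Fourier side: vanishing of $x\mapsto\dup{f}{\pi(h,x)\phi}$ for all $x$ forces the tempered distribution $\widehat{f}\cdot\widehat{\psi}(h^*\cdot)$ to vanish for each $h$; since $\widehat{\psi}(h^*\cdot)=1$ on the open set $(h^*)^{-1}B_{1/2}(e)$, testing against test functions supported there shows $\widehat{f}$ vanishes on that set, and these sets cover $\cone$, so $\supp\widehat{f}\subseteq\partial\cone$, i.e. $f=0$ in $\mathcal{S}_\cone'\simeq\{\supp\widehat{f}\subseteq\overline{\cone}\}/\mathcal{S}'_{\partial\cone}$. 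Two remarks: the covering requires transitivity on $\cone$ of the \emph{adjoint} group $\{h^*\mid h\in\grH\}$, not of $\grH$ itself (this holds, and it is the same fact that underlies Lemma~\ref{lem:newhaar}, so make it explicit); and the passage from vanishing convolutions to the vanishing product is standard distribution theory, not an obstacle. Your route is more elementary and self-contained than the paper's: it avoids the Besov-norm machinery and the external citation entirely, at the cost of redoing (in two lines) the support localization that those citations encapsulate.

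The one step you propose that would actually fail is your advertised ``cleanest route'': deducing density of $\mathrm{span}\{\pi(h,x)\phi\}$ in the Fr\'echet space $\mathcal{S}_\cone$ from irreducibility of $\pi$ on $L^2_\cone$. Irreducibility gives density of the orbit span in the $L^2$ norm, and an $L^2$-dense subspace of $\mathcal{S}_\cone$ need not be dense in the much finer Schwartz topology, which is what cyclicity against $\mathcal{S}_\cone^*$ requires. Nor can the reproducing formula of Lemma~\ref{lemma:reprod} ``transfer'' the density: recovering $f$ weakly from the orbit (Theorem~\ref{thm:coorbitsduality}) presupposes that $\phi$ is an analyzing vector, i.e. cyclic, so invoking it here is circular; by itself the formula only says $W_\phi(f)*W_\phi(\phi)=W_\phi(f)$, which is vacuous when $W_\phi(f)=0$. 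Drop that fallback and write up your direct support argument; it needs no density statement at all.
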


\begin{proof}
  The Fourier transform ensures that this is equivalent to 
  showing that $\widehat{\pi}$ is a continuous representation.
  The determinant is continuous, so we will investigate
  the $L^\infty$-normalized representation instead.
  For $f\in \mathcal{S}(V)$ with support
  in $\cone$ define 
  $$f_{h,x}(w) = f(h^*w) e^{-i\ip{x}{w}},$$ 
  for $h\in \grH$ and $x\in V$.
  Since $h^*w \in \cone$ if $w\in  \cone$ we see that
  $f_{h,x}$ is a Schwartz function 
  supported in $\cone$, so $\mathcal{S}_\cone$ is
  $\pi$-invariant. 

  We now check that $f_{h,x} \to f$
  in the Schwartz semi-norms as $h\to I$ and $x\to 0$. 
  Taking one partial derivative we see that
  \begin{align*}
    \frac{\partial f_{h,x}}{\partial w_k} (w)
    &- \frac{\partial f}{\partial w_k}(w) \\
    &= \sum_l h_{lk}\frac{\partial f}{\partial w_l} (h^*w)e^{-i\ip{x}{w}} 
    - iw_k f(h^*w) e^{-i\ip{x}{w}} - \frac{\partial f}{\partial w_k}(w) \\
    &= 
    (h_{kk}-1) \frac{\partial f}{\partial w_k} (h^*w)e^{-i\ip{x}{w}} +
    \sum_{l\neq k} h_{lk}\frac{\partial f}{\partial w_l} (h^*w)e^{-i\ip{x}{w}} 
    \\ &\qquad 
    - iw_k f(h^*w) e^{-i\ip{x}{w}}
    + \frac{\partial f}{\partial w_k}(h^*w)e^{-i\ip{x}{w}} -
    \frac{\partial f}{\partial w_k}(w) \\
    &= \sum_{|\beta|\leq |\alpha|} 
    c_\beta(h,x) \pdif^{\beta}f(h^*w)e^{-i\ip{x}{w}} + 
    (\pdif^\alpha f(h^*w)e^{-i\ip{x}{w}} -\pdif^\alpha f(w)),
  \end{align*}
  where $\alpha = e_k$ and $c_\beta(h,x)\to 0$ as $(h,x)\to (I,0)$.
  By repeating the argument we get
  \begin{align*}
    \pdif^\alpha f_{h,x}(w) &- \pdif^\alpha f(w) \\
    &= \sum_{|\beta|\leq |\alpha|} 
    c_\beta(h,x) \pdif^{\beta}f(h^*w)e^{-i\ip{x}{w}} + 
    (\pdif^\alpha f(h^*w)e^{-i\ip{x}{w}} -\pdif^\alpha f(w)).
  \end{align*}
  where $c_\beta(h,x)\to 0$ as $(h,x)\to (I,0)$.
  Using the fact that $|w| = |(h^*)^{-1}h^*w| \leq \| (h^*)^{-1}\| |h^*w|$ we see
  that $\frac{(1+|w|)^N}{(1+|h^*w|)^N} \leq C_N(h)$, where $C_N(h)$ 
  depends continuously on $h$.
  For $|\alpha|\leq N$ we thus get
  \begin{align*}
    (1+|w|)^N &|\pdif^\alpha f_{h,x}(w) - \pdif^\alpha f(w)| \\
    &\leq C_N(h) \sum_{|\beta|\leq |\alpha|} 
    c_\beta(h,x) (1+|h^*w|)^N |\pdif^{\beta}f(h^*w)|\\
    &\qquad + 
    (1+|w|)^N |\pdif^\alpha f(h^*w)e^{-i\ip{x}{w}} -\pdif^\alpha f(w)| \\
    &\leq 
    C_N(h) \sum_{|\beta|\leq |\alpha|} c_\beta(h,x) \| f\|_N + 
    (1+|w|)^N |\pdif^\alpha f(h^*w)e^{-i\ip{x}{w}} -\pdif^\alpha f(w)|.
  \end{align*}
  Since $c_\beta (h,x)$ tend to $0$ as $(h,x)\to (I,0)$,
  we investigate the remaining term 
  \begin{equation*}
   |\pdif^\alpha f(h^*w)e^{-i\ip{x}{w}} -\pdif^\alpha f(w)|
   \leq |\pdif^\alpha f(h^*w)-\pdif^\alpha f(w)| + |\pdif^\alpha f(w)(e^{-i\ip{x}{w}}-1)|
  \end{equation*}
  First, let $\gamma(t)=w+t(h^*w-w)$. For $|\alpha|=N$ we get
  \begin{align*}
    |\pdif^\alpha f(h^*w) &- \pdif^\alpha f(w)|(1+|w|^2)^{N} \\
    & \leq \int_0^1
    |\nabla{\pdif^\alpha f}(\gamma_{h,w}(t))||\gamma_{h,w}'(t)| (1+|w|^2)^{N} \,dt\\
    &\leq 
    \|h^*-I\| \int_0^1 
    |\nabla{\pdif^\alpha f}(\gamma_{h,w}(t))|(1+|\gamma(t)|^2)^{N+1} 
    \left( \frac{1+|w|^2}{1+|\gamma(t)|^2}\right)^{N+1}  \,dt\\
    &\leq C\|h^*-I\| \|f\|_{N+1},
  \end{align*}
  where the constant $C$ is uniformly bounded in $h$.
  Next let $\gamma(t)=tx$, then
  \begin{align*}
    (1+|w|)^N |\pdif^\alpha f(w)(e^{-i\ip{x}{w}}-1)|    
    &\leq (1+|w|)^N |\pdif^\alpha f(w)| |\int_0^1 -iw\gamma'(t) e^{-it\ip{x}{w}}) \,dt| \\
    &\leq (1+|w|)^N |\pdif^\alpha f(w)| |w||x| \\
    &\leq \|f\|_{N+1} |x|.
  \end{align*}
  This shows that the representation $\pi$ is continuous on 
  $\mathcal{S}_\cone$.
  
  To show cyclicity, assume that $\widetilde{f}$ is in 
  $\mathcal{S}^*_\Omega$ and 
  $\dup{\widetilde{f}}{\pi(a,x)\phi}=0$. Notice that
  $\dup{\widetilde{f}}{\pi(a,x)\phi} = f*\psi_{(h^{-1})^*}(x)$,
  where $f$ is the tempered distribution in $\mathcal{S}'_\Omega$
  corresponding to $\widetilde{f}$.
  By the norm equivalence of Theorem~\ref{thm:normequiv} and 
  Lemma~\ref{lem:newhaar}, we see that 
  $f=0$ in all Besov spaces $\besov_s^{p,q}$ and thus also 
  in $\mathcal{S}_\cone'$ 
  (see \cite{Bekolle2004} Lemma 3.11 and 3.22 and note
  that $\mathcal{S}_\cone'$ is equipped with the weak$^*$ topology).
  This proves that $\widetilde{f}=0$ and $\phi$ is cyclic.
\end{proof}

For $\psi\in\mathcal{S}_\cone$ define the wavelet transform of 
$f\in\mathcal{S}_\cone^*$ by
\begin{equation*}
  W_\psi(f)(h,x) = \dup{f}{\pi(h,x)\psi}.
\end{equation*}
Under certain assumptions on $\psi$ we get a reproducing formula.
\begin{lemma}
  \label{lemma:reprod}
  If $\psi\in\mathcal{S}_\cone$ is such that 
  $\widehat{\psi}$ has compact support and 
  \begin{equation*}
    \int_\grH |\widehat{\psi}(h^*e)|^2 \,dh =1,
  \end{equation*}
  then 
  $$W_\psi(f)*W_\psi(\psi) = W_\psi(f)$$
  for all $f \in \mathcal{S}_\cone^*$.
  Here the convolution is the group convolution on $\grG=\grH\ltimes V$.
\end{lemma}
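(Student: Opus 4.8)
The plan is to convert the group convolution into the orthogonality relations of the square--integrable representation $\pi$ on $L^2_\cone$, to establish those relations by a Schur--lemma argument in which the stated normalization is exactly the constant that turns the resolution of the identity into $1$, and finally to transport the resulting reproducing formula from $L^2_\cone$ to the distribution space $\mathcal{S}_\cone^*$.

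First I would unravel the left--hand side. Fix $g\in\grG$. Since $\pi$ is unitary on $L^2_\cone$ and $\psi\in\mathcal{S}_\cone\subseteq L^2_\cone$, the factor appearing in the convolution can be rewritten as
\begin{align*}
  W_\psi(\psi)((g')^{-1}g)&=\dup{\psi}{\pi((g')^{-1}g)\psi}=\dup{\pi(g')\psi}{\pi(g)\psi}\\
  &=\overline{W_\psi(\pi(g)\psi)(g')}.
\end{align*}
Abbreviating $u_0=\pi(g)\psi$, the definition of group convolution then gives
\[
  W_\psi(f)*W_\psi(\psi)(g)=\int_\grG W_\psi(f)(g')\,\overline{W_\psi(u_0)(g')}\,dg',
\]
so, since $\dup{f}{u_0}=W_\psi(f)(g)$, the lemma is equivalent to the claim that $\int_\grG W_\psi(f)(g')\overline{W_\psi(u_0)(g')}\,dg'=\dup{f}{u_0}$ for every vector $u_0$ of the form $\pi(g)\psi$.

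Second, I would prove the Hilbert--space version. Define the weakly convergent operator $Au=\int_\grG\dup{u}{\pi(g')\psi}\pi(g')\psi\,dg'$ on $L^2_\cone$. Using left--invariance of $dg$ together with the unitarity of $\pi$, the substitution $g'\mapsto g_0g'$ shows $A\pi(g_0)=\pi(g_0)A$ for all $g_0$, so $A$ is a bounded self--adjoint intertwiner of the irreducible representation $\pi$ and, by Schur's lemma, $A=cI$ for a scalar $c\ge 0$. To compute $c$ I would pass to the frequency side via $\widehat{\pi}(h,x)\widehat\psi(w)=\sqrt{\det h}\,e^{-i\ip{x}{w}}\widehat\psi(h^*w)$: Plancherel in the $x$--variable for fixed $h$, followed by integration against the Haar measure $dx\,dh/\det h$, turns $\dup{Au}{u}=\int_\grG|W_\psi(u)(g')|^2\,dg'$ into a constant multiple of $\int_\cone|\widehat u(w)|^2\,\Phi(w)\,dw$, where $\Phi(w)=\int_\grH|\widehat\psi(h^*w)|^2\,dh$. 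The finiteness and local boundedness of $\Phi$, hence the boundedness of $A$ needed for Schur, come from the compact support of $\widehat\psi$; the identity $A=cI$ forces the continuous function $\Phi$ to be constant on $\cone$, and evaluating at $w=e$ identifies that constant with the hypothesis $\int_\grH|\widehat\psi(h^*e)|^2\,dh=1$. With the Fourier and Haar normalizations in force this yields $c=1$, i.e. the reproducing formula $u=\int_\grG W_\psi(u)(g')\pi(g')\psi\,dg'$ weakly in $L^2_\cone$. (Equivalently, this is the Duflo--Moore orthogonality relation for the square--integrable $\pi$, with $\psi$ admissible precisely because $\widehat\psi$ is compactly supported and the stated integral computes the Duflo--Moore constant.)

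Finally, I would apply the $L^2$ reproducing formula to $u_0=\pi(g)\psi$ and let $f\in\mathcal{S}_\cone^*$ act. Since the pairing $\dup{f}{\cdot}$ is continuous on $\mathcal{S}_\cone$, the interchange
\begin{align*}
  \dup{f}{u_0}&=\Dup{f}{\int_\grG W_\psi(u_0)(g')\pi(g')\psi\,dg'}\\
  &=\int_\grG\overline{W_\psi(u_0)(g')}\,W_\psi(f)(g')\,dg'
\end{align*}
is legitimate provided the reconstruction integral converges not merely weakly in $L^2_\cone$ but in the Fr\'echet topology of $\mathcal{S}_\cone$. Establishing this convergence is the main obstacle. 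It is here that the compact support of $\widehat\psi$ is essential: it forces the matrix coefficient $g'\mapsto W_\psi(u_0)(g')=\dup{\pi(g)\psi}{\pi(g')\psi}$ to decay rapidly, since the overlap condition on $\supp\widehat\psi$ confines the effective range of the $\grH$--variable and produces Schwartz decay in the $V$--variable, while the continuity estimates for $\pi$ on $\mathcal{S}_\cone$ from Lemma~\ref{lemma:repn}, together with the growth bounds of Lemma~\ref{lem:rapidoncone}, control each Schwartz seminorm of $g'\mapsto\pi(g')\psi$. Dominating the seminorm growth by the matrix--coefficient decay gives absolute convergence of the $\mathcal{S}_\cone$--valued integral, and combined with the previous paragraph this identifies $\dup{f}{u_0}$ with $W_\psi(f)*W_\psi(\psi)(g)$, completing the proof.
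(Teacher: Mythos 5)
Your proposal is correct in substance, but it takes a genuinely different route from the paper's. The paper never passes through the $L^2$ theory at all: it expands the group convolution directly into an $\grH$-integral of distribution pairings $\dup{f}{\tau_{x_1}\psi^{h_1}*(\psi^**\psi)^h}$, and then proves that the $\mathcal{S}_\cone$-valued integral $\int_C \phi*(\psi^**\psi)^h\,d\mu(h)$ converges to $\phi$ in the Fr\'echet topology as the compact sets $C$ grow, identifying the limit by pointwise convergence on the Fourier side (this is where the hypothesis $\int_\grH|\widehat\psi(h^*e)|^2\,dh=1$ enters, via left invariance of $dh$). You instead identify the limit through the square-integrable representation theory on $L^2_\cone$ (Schur's lemma / Duflo--Moore orthogonality) and then transfer back to $\mathcal{S}_\cone^*$. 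What your route buys is conceptual transparency about the normalization --- it is literally the admissibility constant; what it costs is dependence on irreducibility of $\pi$ on $L^2_\cone$ (available in the paper, with citations to F\"uhr and Fabec--\'Olafsson) plus an extra transfer step. Crucially, both proofs share the same analytic core, and it is exactly the step you flag as ``the main obstacle'': absolute convergence of the reconstruction integral in every Schwartz seminorm, which is what licenses interchanging $\dup{f}{\cdot}$ with the integral. The paper spends the bulk of its proof on precisely this (Parseval, integration by parts, and the growth estimates of Lemma~\ref{lem:rapidoncone}); your sketch correctly isolates the mechanism --- compact support of $\widehat\psi$ forces the kernel $W_\psi(\psi)(h,x)$ to vanish for $h$ outside the compact set $\{h \mid h^*\supp\widehat\psi\cap\supp\widehat\psi\neq\emptyset\}$ and to be Schwartz in $x$ uniformly there --- but those estimates would have to be written out for the proof to be complete.

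Two smaller points. First, your Schur argument is circular as phrased: boundedness of $A$ requires $\Phi\in L^\infty(\cone)$, which does not follow from local boundedness; but the substitution $h\mapsto h_0h$ (left invariance of $dh$) gives $\Phi(h_0^*w)=\Phi(w)$, and since the adjoint group $\grH^*$ still acts transitively on $\cone$, this yields $\Phi\equiv\Phi(e)=1$ outright --- after which $A=I$ follows by polarization and Schur's lemma is not needed at all. Second, your constant $c=1$ holds only up to the $(2\pi)^n$ produced by Plancherel; since the paper itself suppresses such factors (it states $\widehat{f*g}=\widehat{f}\widehat{g}$ under its Fourier normalization), this is consistent with its conventions, but in a self-contained write-up you should either track the constant or adopt that convention explicitly.
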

\begin{proof}
  For $\phi$ we denote by
  $\phi^h$ the function defined by
  \begin{equation*}
    \widehat{\phi^h}(w) = \widehat{\phi}(h^*w).
  \end{equation*}
  Then
  $\phi_1^h*\phi_2^h = |\det(h)|(\phi_1*\phi_2)^h$, and
  \begin{align*}
    W_\psi(f)*W_\psi(\psi) (h_1,x_1)
    &= \frac{1}{\sqrt{|\det(h_1)|}} \int_\grH 
    \dup{f}{\tau_{x_1}\psi^{h_1}*(\psi^*)^{h}*\psi^h} \,\frac{dh}{|\det(h)|^2} \\
    &= \frac{1}{\sqrt{|\det(h_1)|}} \int_\grH 
    \dup{f}{\tau_{x_1}\psi^{h_1}*(\psi^**\psi)^h} \,\frac{dh}{|\det(h)|}.
  \end{align*}
  
  The function inside the last integral is continuous,
  so it is enough to show that for $\phi\in\mathcal{S}_\cone$ the net
  \begin{equation*}
    g_C(x) = \int_C \phi*(\psi^**\psi)^h \,\frac{dh}{|\det(h)|},
  \end{equation*}
  converges to $\phi$ in $\mathcal{S}_\cone$ for growing compact sets 
  $C\to \grH$.
  By the assumption on $\psi$ we get that 
  $\widehat{g}_C\to \widehat{\phi}$ pointwise. Thus 
  we only need to show that $g_C$ converges, which will happen if
  the integral 
  $$
  \int_\grH \sup_x (1+|x|^2)^N|\pdif^\alpha \phi*(\psi^**\psi)^h(x)| \,\frac{dh}{|\det(h)|} 
  <\infty
  $$
  is finite for all $N$ and $\alpha$. Since both $\pdif^\alpha \phi$ and 
  $\psi^**\psi$ are in $\mathcal{S}_\cone$, we need only focus on showing that
  \begin{equation*}
    \int_\grH \sup_x (1+|x|^2)^N|\phi_1*\phi_2^h(x)| \,\frac{dh}{|\det(h)|} 
    <\infty
  \end{equation*}
  for all $N$ and any $\phi_1,\phi_2\in\mathcal{S}_\cone$.
  We can further assume that $\widehat{\phi}_2$ has compact support.
  Note that the Parseval identity, integration by parts and
  the fact that $\widehat{\phi}_1,\widehat{\phi}_2$ vanish on the
  boundary of $\cone$, give
  \begin{align*}
    |\phi_1*\phi_2^h(x) |
    &= \left|\int_\cone \widehat{\phi}_1(w)\widehat{\phi}_2(h^*w)e^{i\ip{x}{w}}\,dw \right|\\
    &\leq \frac{1}{|x^\alpha|}
    \int_\cone \sum_{|\beta|\leq |\alpha|} |p_\beta(h^*)|
    |\pdif^\beta\widehat{\phi}_1(w) \pdif^{\alpha-\beta}\widehat{\phi}_2(h^*w)|\,dw.
  \end{align*}
  Here $p_\beta(h^*)$ is a polynomium in the entries of $h^*$.
  Choosing $|\alpha|$ large enough takes care of the terms
  $(1+|x|^2)^N$ for large $|x|$ (and for small $|x|$ we use $\alpha=0$), so
  \begin{align*}
    \sup_x |\phi_1*\phi_2^h(x)|(1+|x|^2)^N
    &\leq \sum_{|\beta|\leq |\alpha|} |p_\beta(h^*)| \int_\cone
    |\pdif^\beta\widehat{\phi}_1(w) \pdif^{\alpha-\beta}\widehat{\phi}_2(h^*w)|\,dw.
  \end{align*}
  Each partial derivative is again in $\mathcal{S}(V)$ and with support in 
  $\cone$, so we investigate terms of the general form 
  $\int |\widehat{\phi}_1(w) \widehat{\phi}_2(h^*w)|\,dw$.
  Denote by $h_w$ the unique element
  in $\grH$ for which $w=h_we$, then
  \begin{align*}
    \int_\grH \int_\cone p(\| h\|) |\widehat{\phi}_1(w)||\widehat{\phi}_2(h^*w)|\,dw\,dh
    &= \int_\grH \int_\cone p(\| (h_w^*)^{-1}h\|) |\widehat{\phi}_1(w)||\widehat{\phi}_2(h^*e)|\,dw\,dh. \\
    \intertext{
      Now $\widehat{\phi}_2$ 
      is assumed to have compact support and thus the integral 
      over $\grH$ is finite, so we get}
   &\leq C \int_\cone p(1/\| h_w\|) |\widehat{\phi}_1(w)|\,dw. \\
 \end{align*}
 This will be finite, because the estimate
 $|\widehat{\phi}_1(w)|\leq C \frac{\Delta(w)^l}{(1+|w|^2)^k}$.
 When $\|h_w\|\sim |w|$ is close to zero we use $l$ sufficiently large
 and for large $\|h_w\|\sim|w|$ the integral is finite for $k$ large enough.
 This finishes the proof.
\end{proof}

For $1\leq p,q<\infty$ and $s\in\mathbb{R}$ 
define the mixed norm Banach space $L^{p,q}_s(\grG)$ on the group $\grG$ to be 
the measurable functions for which
\begin{equation*}
  \|F \|_{L^{p,q}_s} 
  := \left( \int_\grH \left( \int_V |F(h,x)|^p \,dx \right)^{q/p} |Det(h)|^s \,dh
  \right)^{1/q} <\infty.
\end{equation*}

\begin{lemma}
  \label{lemma:SinBesov}
  For $\psi,\phi\in \mathcal{S}_\cone$ the wavelet transform $W_\psi(\phi)$ is in
  $L^{p,q}_s(\grG)$ for $1\leq p,q<\infty$ and any real $s$. 
\end{lemma}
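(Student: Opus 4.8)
The plan is to pass to the frequency side, realize $W_\psi(\phi)(h,\cdot)$ for each fixed $h$ as an oscillatory integral over the cone, and then control the two variables separately: rapid decay in $x$ (to make the inner $L^p$-norm finite and uniformly bounded in $p$) and rapid decay in $h$ (to make the outer integral against $|\det(h)|^s\,dh$ converge). Using the conjugate pairing on $\mathcal{S}_\cone$, Parseval, and the frequency form of the representation,
\begin{equation*}
  W_\psi(\phi)(h,x) = \dup{\phi}{\pi(h,x)\psi}
  = \sqrt{\det(h)}\int_\cone \widehat{\phi}(w)\overline{\widehat{\psi}(h^*w)}\, e^{i\ip{x}{w}}\,dw.
\end{equation*}
I would aim for a factorized pointwise bound $|W_\psi(\phi)(h,x)| \le \Phi(h)(1+|x|)^{-M}$ with an integer $M>n$, since then $\|W_\psi(\phi)(h,\cdot)\|_p \le \Phi(h)\|(1+|x|)^{-M}\|_p \le C\,\Phi(h)$ with $C$ independent of $p$ (because $(1+|x|)^{-Mp}\le(1+|x|)^{-M}$ for $p\ge1$), and the problem reduces to showing $\int_\grH \Phi(h)^q|\det(h)|^s\,dh<\infty$.

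To produce the $x$-decay and the weight $\Phi$, I would integrate by parts in $w$. Since $\widehat\phi,\widehat\psi\in\mathcal{S}(V)$ are supported in $\overline\cone$ and vanish to infinite order on $\partial\cone$ by Lemma~\ref{lem:rapidoncone}, no boundary terms appear, and for each multi-index $\alpha$
\begin{equation*}
  x^\alpha W_\psi(\phi)(h,x)
  = \pm\sqrt{\det(h)}\int_\cone \pdif_w^\alpha\big(\widehat\phi(w)\overline{\widehat\psi(h^*w)}\big)e^{i\ip{x}{w}}\,dw.
\end{equation*}
By Leibniz and the chain rule, each $w$-derivative falling on $\widehat\psi(h^*\cdot)$ produces a polynomial in the entries of $h^*$; collecting terms and using $(1+|x|)^M\le C_M\sum_{|\alpha|\le M}|x^\alpha|$ gives
\begin{equation*}
  (1+|x|)^M|W_\psi(\phi)(h,x)|
  \le C\sqrt{\det(h)}\,p(\|h\|)\sum_{\beta,\gamma}\int_\cone|\pdif^\beta\widehat\phi(w)|\,|\pdif^\gamma\widehat\psi(h^*w)|\,dw
\end{equation*}
for a fixed polynomial $p$ and a finite sum over $|\beta|,|\gamma|\le M$. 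I would take the right-hand side to be $\Phi(h)$; the derivatives $\pdif^\beta\widehat\phi,\pdif^\gamma\widehat\psi$ again lie in $\mathcal{S}(V)$, are supported in $\overline\cone$, and satisfy Lemma~\ref{lem:rapidoncone}.

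It remains to prove $\int_\grH\Phi(h)^q|\det(h)|^s\,dh<\infty$. The key observation is that each cone-integral $B_{\beta,\gamma}(h) = \int_\cone|\pdif^\beta\widehat\phi(w)||\pdif^\gamma\widehat\psi(h^*w)|\,dw$ is uniformly bounded, $B_{\beta,\gamma}(h)\le\|\pdif^\gamma\widehat\psi\|_\infty\|\pdif^\beta\widehat\phi\|_{L^1(\cone)}$, so $B_{\beta,\gamma}(h)^q\le C\,B_{\beta,\gamma}(h)$. Consequently $\Phi(h)^q|\det(h)|^s$ is dominated by a finite sum of terms $w(h)B_{\beta,\gamma}(h)$ with tame weights $w(h)=|\det(h)|^\sigma p(\|h\|)$ (a power of the determinant times a polynomial), and the matter reduces to the $q=1$ estimates
\begin{equation*}
  \int_\grH |\det(h)|^\sigma p(\|h\|)\int_\cone|\pdif^\beta\widehat\phi(w)|\,|\pdif^\gamma\widehat\psi(h^*w)|\,dw\,dh<\infty,
\end{equation*}
which are exactly of the type handled at the end of the proof of Lemma~\ref{lemma:reprod}, now carrying a determinant weight and lacking the convenience of compact support.

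This last double integral is the main obstacle, and I would treat it as in Lemma~\ref{lemma:reprod}. Writing $w=h_we$ and using $\Delta(h^*w)=|\det(h)|^{r/n}\Delta(w)$ from (\ref{eq:determinantrelation}), together with the $\grG_0(\cone)$-invariance of the measure on $\cone\cong\grH$ and Lemma~\ref{lem:newhaar}, I would change variables so that the argument of $\pdif^\gamma\widehat\psi$ becomes $h^*e$; the weight $|\det(h)|^\sigma$ is thereby converted into a power $\Delta(w)^{-\sigma n/r}$ on the cone. The inner $\grH$-integral then converges by the Schwartz decay and boundary vanishing of $\pdif^\gamma\widehat\psi$ (Lemma~\ref{lem:rapidoncone}), replacing the compact-support argument of Lemma~\ref{lemma:reprod}, and the remaining integral over $\cone$ converges after applying Lemma~\ref{lem:rapidoncone} to $\pdif^\beta\widehat\phi$ with the vanishing order $l$ chosen large enough to absorb all accumulated powers of $\Delta(w)$ and the decay order $k$ large enough to absorb $p(\|h\|)$ and the volume growth of $\grH$. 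Since the parameters in Lemma~\ref{lem:rapidoncone} are free, these choices can be met for every $p,q$ and every real $s$, which finishes the proof.
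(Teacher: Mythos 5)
Your argument is essentially correct, but it takes a genuinely different route from the paper's proof of this lemma. The paper disposes of it in two lines: writing $W_\psi(\phi)(h,x)=\phi*\psi^*_{(h^{-1})^*}(x)$ (up to a factor $\sqrt{\det(h)}$, which only shifts the weight $s$ and is immaterial since all $s$ are allowed), the $L^{p,q}_s$-norm of $W_\psi(\phi)$ becomes, after the change of variable $h\mapsto(h^*)^{-1}$ justified by Lemma~\ref{lem:newhaar}, a continuous Besov-type norm of $\phi$; Theorem~\ref{thm:normequiv} identifies this with the discrete norm $\|\phi\|_{\besov^{p,q}_{\tilde{s}}}$, and the embedding $\mathcal{S}_\cone\subseteq\besov^{p,q}_{\tilde{s}}$ (Proposition 3.9 of \cite{Bekolle2004}) finishes. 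You instead prove the estimate from scratch, essentially by adapting the paper's own technique from the proof of Lemma~\ref{lemma:reprod}: Parseval plus integration by parts (with no boundary terms, by the vanishing guaranteed by Lemma~\ref{lem:rapidoncone}) to get the factorized bound $\Phi(h)(1+|x|)^{-M}$; the linearization $B_{\beta,\gamma}^q\leq CB_{\beta,\gamma}$ to reduce to $q=1$; and the substitution $h\mapsto(h_w^*)^{-1}h$ together with $\det(h_w)=\Delta(w)^{n/r}$ from (\ref{eq:determinantrelation}) to decouple the $\grH$- and $\cone$-integrals, with the rapid decay and boundary vanishing of Lemma~\ref{lem:rapidoncone} replacing the compact-support shortcut available in Lemma~\ref{lemma:reprod}. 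What your route buys: it is self-contained and applies verbatim to arbitrary $\psi,\phi\in\mathcal{S}_\cone$, whereas the paper's citation of Theorem~\ref{thm:normequiv} is, strictly speaking, stated only for the special band-limited $\psi$ with $1_{B_{1/2}(e)}\leq\widehat{\psi}\leq 1_{B_2(e)}$, so covering the general $\psi$ asserted in the lemma (and needed later for the vectors $\pi(D^\alpha)\psi$ in the atomic-decomposition section) requires the more general domination results of \cite{Bekolle2004}. What it costs: length, and the fact that your final convergence claim is a sketch; to complete it you need the quantitative ingredients $\|h_w\|\sim|w|$, polynomial control of $\|h_w^{-1}\|$ near $\partial\cone$ and near $0$ (absorbed by taking $l$ large in Lemma~\ref{lem:rapidoncone}), submultiplicativity to split $p(\|(h_w^*)^{-1}h\|)$ into a $w$-factor and an $h$-factor, and invariance of the measure on $\cone\cong\grH$ under the full group $\grG_0(\cone)$ to justify the substitution. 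Since these are exactly the informal steps the paper itself relies on in Lemma~\ref{lemma:reprod}, your proof is at the paper's own level of rigor, and I see no gap in it.
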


\begin{proof}
  Since
  $
  W_\psi(\phi)(h,x) =
  \phi*\psi^*_{(h^{-1})^*}(x),
  $
  this follows from the norm equivalence of Theorem~\ref{thm:normequiv}
  coupled with Lemma~\ref{lem:newhaar}, 
  as well as the fact that a function
  in $\mathcal{S}_\cone$ is in any Besov space
  (see Proposition 3.9 in \cite{Bekolle2004}).
\end{proof}

This verifies that the requirements of Theorem~\ref{thm:coorbitsduality}
are satisfied. It also shows that the representation involved has integrable
matrix coefficients, which is the basis for the investigation in 
\cite{Feichtinger1989a}.
We thus complete our wavelet characterization of the 
Besov spaces by the following result. Remember
that $\widetilde{f}\in\mathcal{S}^*_\Omega$ corresponds to
$f\in\mathcal{S}'_\Omega$ via $\dup{\widetilde{f}}{\phi} = f(\overline{\phi})$.

\begin{theorem}
  Given $1\leq p,q<\infty$ and $s\in\mathbb{R}$ let $s'=sr/n-q/2$. 
  If $\phi$ is the cyclic vector from Lemma~\ref{lemma:repn}
  normalized to also satisfy Lemma~\ref{lemma:reprod},
  then the mapping $f\mapsto \widetilde{f}$ (restricted to $\besov_s^{p,q}$)
  is a Banach space isomorphism from the Besov space $\besov_s^{p,q}$ 
  to the coorbit $\Co_{\mathcal{S}_\cone}^\phi L^{p,q}_{s'}(\grG)$ 
  for the representation $\pi$.
\end{theorem}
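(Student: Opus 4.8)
The plan is to reduce the assertion to the norm equivalence of Theorem~\ref{thm:normequiv} by computing the wavelet transform $W_\phi(\widetilde{f})$ explicitly and then performing the change of variables $h\mapsto(h^*)^{-1}$ supplied by Lemma~\ref{lem:newhaar}. The map $f\mapsto\widetilde{f}$, $\dup{\widetilde{f}}{\eta}=f(\overline{\eta})$, is a bijection of $\mathcal{S}_\cone'$ onto $\mathcal{S}_\cone^*$, and Lemmas~\ref{lemma:repn}, \ref{lemma:reprod} and \ref{lemma:SinBesov} guarantee that $\phi$ is a cyclic analyzing vector satisfying the hypotheses of Theorem~\ref{thm:coorbitsduality}; hence $\Co_{\mathcal{S}_\cone}^\phi L^{p,q}_{s'}(\grG)$ is a well-defined Banach space carrying the norm $\|W_\phi(\widetilde{f})\|_{L^{p,q}_{s'}}$. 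Consequently the whole theorem follows once we establish the single equivalence
\[
\|W_\phi(\widetilde{f})\|_{L^{p,q}_{s'}(\grG)}\simeq\|f\|_{\besov_s^{p,q}},\qquad f\in\mathcal{S}_\cone'.
\]

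First I would compute $W_\phi(\widetilde{f})$. Writing $\pi(h,x)\phi=\tau_x\Phi_h$ with $\Phi_h(t)=\det(h)^{-1/2}\phi(h^{-1}t)$ and using the identity $f*\eta(x)=\dup{\widetilde{f}}{\tau_x\eta^*}$ together with $\phi=\psi^*$, one finds $\Phi_h^*(t)=\det(h)^{-1/2}\psi(h^{-1}t)$, so that $W_\phi(\widetilde{f})(h,x)=f*\Phi_h^*(x)$. On the Fourier side $\widehat{\Phi_h^*}(w)=\sqrt{\det(h)}\,\widehat{\psi}(h^*w)=\sqrt{\det(h)}\,\widehat{\psi_{(h^*)^{-1}}}(w)$, where $\psi_h$ is the dilate appearing in Theorem~\ref{thm:normequiv}; hence
\[
W_\phi(\widetilde{f})(h,x)=\sqrt{\det(h)}\,\bigl(f*\psi_{(h^*)^{-1}}\bigr)(x).
\]
The factor $\sqrt{\det(h)}$, which originates in the square-integrable (unitary) normalization of $\pi$, is precisely what forces the shift by $q/2$ in the definition of $s'$.

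Inserting this into the definition of $\|\cdot\|_{L^{p,q}_{s'}}$ and pulling the determinant out of the inner $L^p$-integral gives
\[
\|W_\phi(\widetilde{f})\|_{L^{p,q}_{s'}}^q=\int_\grH\|f*\psi_{(h^*)^{-1}}\|_p^q\,\det(h)^{s'+q/2}\,dh.
\]
Applying Lemma~\ref{lem:newhaar} (the substitution $h\mapsto(h^*)^{-1}$ alluded to in the Remark preceding Lemma~\ref{lemma:repn}), and using $(((h^*)^{-1})^*)^{-1}=h$ together with $\det((h^*)^{-1})=\det(h)^{-1}$, the right-hand side becomes $C\int_\grH\|f*\psi_h\|_p^q\det(h)^{-(s'+q/2)}\,dh$. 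Since $s'=sr/n-q/2$ we have $s'+q/2=sr/n$, so the weight is exactly $\det(h)^{-sr/n}$ and Theorem~\ref{thm:normequiv} identifies this integral with $\|f\|_{\besov_s^{p,q}}^q$ up to a multiplicative constant. This proves the desired equivalence.

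Combining the norm equivalence with the bijectivity of $f\mapsto\widetilde{f}$ then finishes the argument: the equivalence shows simultaneously that $\widetilde{f}$ lies in the coorbit exactly when $f$ has finite Besov norm and that the two norms agree up to constants, so the restriction of $f\mapsto\widetilde{f}$ is a bounded bijection with bounded inverse from $\besov_s^{p,q}$ onto $\Co_{\mathcal{S}_\cone}^\phi L^{p,q}_{s'}(\grG)$, i.e. a Banach space isomorphism. I expect the main obstacle to be the bookkeeping of the determinant weights---in particular tracking the $\sqrt{\det(h)}$ produced by the unitary normalization of $\pi$ and executing the adjoint-inverse substitution correctly---because this is exactly the point at which the relation $s'=sr/n-q/2$ is forced and where an exponent slip would silently destroy the identification.
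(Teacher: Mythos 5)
Your proposal is correct and follows essentially the same route as the paper's own (much terser) proof: compute $W_\phi(\widetilde{f})(h,x)=\sqrt{\det(h)}\,f*\psi_{(h^*)^{-1}}(x)$, apply the change of variables $h\mapsto(h^*)^{-1}$ from Lemma~\ref{lem:newhaar}, and invoke Theorem~\ref{thm:normequiv}, which forces exactly the exponent relation $-q/2-s'=-sr/n$, i.e.\ $s'=sr/n-q/2$. Your additional bookkeeping (the explicit Fourier-side computation of $\Phi_h^*$, the determinant exponent tracking, and the appeal to Theorem~\ref{thm:coorbitsduality} via Lemmas~\ref{lemma:repn}, \ref{lemma:reprod} and \ref{lemma:SinBesov} for well-definedness of the coorbit) simply fills in details the paper leaves implicit.
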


\begin{proof}
  We will use Theorem~\ref{thm:normequiv} 
  to determine $s'$.
  Let $\phi=c\psi^*$, and notice that 
  \begin{equation*}
    \dup{\widetilde{f}}{\pi(h,x)\phi}
    = c \sqrt{|\det(h)|} f*\psi_{(h^*)^{-1}}(x).
  \end{equation*}
  Then by Lemma~\ref{lem:newhaar} and 
  Theorem~\ref{thm:normequiv} we get that
  \begin{equation*}
    \|W_\phi(\widetilde{f})\|_{L^{p,q}_{s'}}
    = C \left( \int_\grH \| f*\psi_h \|_p^q \mathrm{Det}(h)^{-q/2-s'}
      \,dh \right)^{1/q},
  \end{equation*}
  which is equivalent to $\| f\|_{\besov^{p,q}_s}$ if
  $-q/2-s' = -sr/n$.
\end{proof}

\subsection{Atomic decompositions}
\label{sec:atomdecompbesov}

In order to obtain atomic decompositions and frames
from Theorems~\ref{thm:discretization} and \ref{thm:12},
we need to show that $\mathcal{S}_\cone$ are 
smooth vectors for $\pi$. A vector $\psi\in \mathcal{S}_\cone$ 
is called smooth if $g\mapsto \pi(g)\psi$ is smooth $\grG\to \mathcal{S}_\cone$.
For smooth vectors $\psi$ define a representation of $\mathfrak{g}$ by
\begin{equation*}
  \pi^{\infty}(X)\psi = \frac{d}{dt}\Big|_{t=0} \pi(exp(tX))\psi.
\end{equation*}
This also induces a representation of the universal
enveloping algebra $\mathcal{U}(\mathfrak{g})$ which
we also denote $\pi^\infty$.
\begin{theorem}
  The space $\mathcal{S}_\cone$ is the space of smooth vectors for the
  representation $(\pi,\mathcal{S}_\cone)$, and
  $(\pi^\infty,\mathcal{S}_\cone)$ is
  a representation of both $\mathfrak{g}$ 
  and $\mathcal{U}(\mathfrak{g})$.
\end{theorem}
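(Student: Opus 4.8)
The plan is to pass to the frequency side, where the representation takes the form
\begin{equation*}
  \widehat{\pi}(h,x)\widehat{\psi}(w) = \sqrt{\det(h)}\,e^{-i\ip{x}{w}}\widehat{\psi}(h^*w),
\end{equation*}
and to exploit that the Fourier transform is a topological isomorphism of $\mathcal{S}(V)$ carrying $\mathcal{S}_\cone$ onto the closed subspace of Schwartz functions supported in $\overline{\cone}$. Since $\mathcal{S}_\cone$ is closed in $\mathcal{S}(V)$ and $\pi(g)\psi\in\mathcal{S}_\cone$ for every $g$, it suffices to prove that $g\mapsto\widehat{\pi}(g)\widehat{\psi}$ is smooth as a map into $\mathcal{S}(V)$. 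First I would compute, for $X$ in the fixed basis of $\mathfrak{g}=\mathfrak{h}\ltimes V$, the infinitesimal generator $\pi^\infty(X)$. For a direction $x_0\in V$ one obtains multiplication by $-i\ip{x_0}{w}$, and for $X\in\mathfrak{h}\subseteq\mathfrak{gl}(V)$ one obtains $\frac{1}{2}\mathrm{tr}(X)+\sum_{k,l}(X^*)_{kl}w_l\,\pdif_{w_k}$. In every case $\pi^\infty(X)$ is a differential operator of order at most one with polynomial coefficients; such an operator is a continuous endomorphism of $\mathcal{S}(V)$, and since both multiplication by a polynomial and differentiation preserve being Schwartz and supported in $\overline{\cone}$, it restricts to a continuous endomorphism of $\mathcal{S}_\cone$.

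The analytic heart of the argument is to upgrade this pointwise computation to strong differentiability in the Fr\'echet topology, i.e. to show that
\begin{equation*}
  \frac{\widehat{\pi}(\exp tX)\widehat{\psi}-\widehat{\psi}}{t}\longrightarrow\widehat{\pi^\infty(X)\psi}
  \qquad\text{in }\mathcal{S}(V)\text{ as }t\to 0.
\end{equation*}
For this I would write the second-order Taylor remainder
\begin{equation*}
  \widehat{\pi}(\exp tX)\widehat{\psi}-\widehat{\psi}-t\,\widehat{\pi^\infty(X)\psi}
  =\int_0^t (t-s)\,\frac{d^2}{ds^2}\,\widehat{\pi}(\exp sX)\widehat{\psi}\,ds
\end{equation*}
and estimate its Schwartz seminorms. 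The second derivative is again a first-order differential operator with polynomial coefficients applied to the dilated-modulated function $w\mapsto\widehat{\psi}(\exp(sX^*)w)e^{-is\ip{x_0}{w}}$, so the seminorm bounds reduce to exactly the estimates already carried out in the proof of Lemma~\ref{lemma:repn}: the modulation contributes powers of $|w|$ absorbed by the rapid decay of $\widehat{\psi}$, while the dilation is controlled through inequalities of the form $(1+|w|)^N\leq C_N(s)(1+|\exp(sX^*)w|)^N$ with $C_N(s)$ depending continuously on $s$. I expect this remainder estimate to be the main obstacle, as it is where the interaction between the noncompact dilation action of $\grH$ and the Schwartz weights must be made uniform in $s$ on a neighbourhood of $0$; it is precisely the one-order-higher version of the bounds in Lemma~\ref{lemma:repn} that forces the remainder to be $o(t)$.

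Once strong first-order differentiability holds for every direction and every vector, the iteration to full smoothness is formal. Because $\pi^\infty(X)$ maps $\mathcal{S}_\cone$ continuously into itself, the derivative $\pi^\infty(X)\psi$ is again a vector to which the first-order result applies; by induction all iterated directional derivatives exist in $\mathcal{S}_\cone$ and are given by $\pi^\infty(X_{\alpha(k)})\cdots\pi^\infty(X_{\alpha(1)})\psi$, and continuity of these operators yields continuity of the derivatives of the orbit map. Hence $g\mapsto\pi(g)\psi$ is $C^\infty$ for every $\psi\in\mathcal{S}_\cone$, and since the space of smooth vectors is by definition a subspace of $\mathcal{S}_\cone$, this forces it to equal $\mathcal{S}_\cone$.

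Finally, to identify $(\pi^\infty,\mathcal{S}_\cone)$ as a representation of $\mathfrak{g}$ and of $\mathcal{U}(\mathfrak{g})$, I would verify on the (now all) smooth vectors that $X\mapsto\pi^\infty(X)$ is linear and satisfies $\pi^\infty([X,Y])=\pi^\infty(X)\pi^\infty(Y)-\pi^\infty(Y)\pi^\infty(X)$. The bracket relation follows by the standard argument for smooth vectors of a Lie group representation, differentiating $\pi(\exp(-sY))\pi(\exp(-tX))\pi(\exp sY)$ in $s$ and $t$ at the origin (equivalently, from equality of the mixed second partials of the smooth orbit map). A linear map of $\mathfrak{g}$ into the associative algebra $\mathrm{End}(\mathcal{S}_\cone)$ respecting brackets then extends, by the universal property of the universal enveloping algebra, to a unique algebra homomorphism $\mathcal{U}(\mathfrak{g})\to\mathrm{End}(\mathcal{S}_\cone)$, which is the asserted representation $\pi^\infty$.
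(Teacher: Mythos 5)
Your proposal is correct and follows essentially the same route as the paper: both pass to the frequency side, identify the infinitesimal generator as a polynomial-coefficient differential operator of order at most one, and upgrade the pointwise derivative to convergence in the Schwartz seminorms via an integral (Taylor) remainder whose estimation reduces to the bounds already established in the proof of Lemma~\ref{lemma:repn}. The differences are minor: the paper uses a first-order integral remainder along a smooth curve and works with the $L^\infty$-normalized representation (so no $\tfrac{1}{2}\mathrm{tr}(X)$ term appears), whereas you use a second-order remainder and, going beyond the paper's rather terse proof, explicitly carry out the induction to derivatives of all orders and the bracket and universal-property verification that makes $(\pi^\infty,\mathcal{S}_\cone)$ a representation of $\mathfrak{g}$ and $\mathcal{U}(\mathfrak{g})$.
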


\begin{proof}
  Again, the determinant does not change the smoothness
  of vectors so we work with the $L^\infty$-normalized representation.
  Let $\gamma(t)=(h(t),x(t))$ be a smooth curve in $\grG$ with $\gamma(0)=(I,0)$
  and $\gamma'(t) = (H,X)$, then 
  the pointwise derivative (on the frequency side)
  of functions $f_{h,x}(w) = f(h^*w)e^{-i\ip{x}{w}}$ is
  \begin{equation*}
    \frac{d}{dt}\Big|_{t=0} f(h(t)^*w)e^{-i\ip{x(t)}{w}}
    = (H^*w) \cdot \nabla f(w) - iX \cdot w f(w).
  \end{equation*}
  This is another Schwartz function
  and we will show it is also the limit of the derivative
  in $S_\cone$.
  \begin{align*}
    \frac{1}{t}(f_{h(t),x(t)}(w) &-f (w))
    - (H^*w) \cdot \nabla f(w) + iX \cdot w f(w), \\
    &= \frac{1}{t} \int_0^t ((h'(s)^*w)\cdot\nabla f(h(s)^*w)e^{-i\ip{x(s)}{w}} 
    - (H^*w) \cdot \nabla f(w) \,ds \\
    &\quad+ \frac{1}{t} \int_0^T iX \cdot w f(w) - ix'(t)\cdot w f(h(s)^*w)\,ds
  \end{align*}
  From the proof of Lemma~\ref{lemma:repn} it is evident that
  each term inside the integral approaches $0$ in the Schwartz topology,
  and the proof is complete.
\end{proof}

This result proves that any vector in $\mathcal{S}_\cone$ is both $\pi$-weakly
and $\pi^*$-weakly differentiable of all orders, and this ensures
that $W_{\pi(D^\alpha)\psi}(\psi)$ and $W_\psi({\pi(D^\alpha)\psi})$ are in 
$L^1_s(\grG)$ for all $s$ by Lemma~\ref{lemma:SinBesov}. Thus the continuities 
required by Theorems \ref{thm:discretization} and \ref{thm:12}
are satisfied and we conclude with the promised atomic decompositions.
\begin{corollary}
  Let $s\in \mathbb{R}$ and $1\leq p,q<\infty$ be given.
  There exists an index set $I$, and a well-spread sequence of points
  $\{ (h_i,x_i)\}_I \subseteq \grG$, such that 
  the collection $\pi(h_i,x_i) \psi$ forms both a Banach frame 
  and an atomic decomposition for $\besov^{p,q}_s$ with sequence space 
  $(L^{{p,q}}_{s'}(\grG))^\#$ when $s'=sr/n-q/2$.
\end{corollary}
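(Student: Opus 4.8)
The plan is to obtain the Corollary as a direct application of Theorems~\ref{thm:discretization} and \ref{thm:12} to the coorbit realization of the Besov space. By the wavelet characterization theorem, the map $f \mapsto \widetilde{f}$ identifies $\besov^{p,q}_s$, up to equivalence of norms, with the coorbit $\Co_{\mathcal{S}_\cone}^\phi L^{p,q}_{s'}(\grG)$, where $s' = sr/n - q/2$ and $\phi$ is the cyclic analyzing vector of Lemmas~\ref{lemma:repn} and \ref{lemma:reprod} (the vector denoted $\psi$ in the statement). It therefore suffices to produce a single well-spread sampling set $\{(h_i,x_i)\}$ for which Theorem~\ref{thm:discretization} yields the two-sided estimate $A_1\|f\| \leq \|\{\dup{f}{\pi(h_i,x_i)\phi}\}\|_{(L^{p,q}_{s'})^\#} \leq A_2\|f\|$ and Theorem~\ref{thm:12} yields the reconstruction $f = \sum_i \lambda_i(T_2^{-1}W_\phi(f))\,\pi(h_i,x_i)\phi$. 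Since the associated sequence space is exactly $(L^{p,q}_{s'}(\grG))^\#$, transporting both conclusions back along $f \mapsto \widetilde{f}$ gives the frame and the atomic decomposition claimed.

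To invoke the two theorems I first check the standing hypotheses on $Y = L^{p,q}_{s'}(\grG)$: solidity, left and right invariance, and continuity of right translation. Solidity is immediate from the pointwise definition of the mixed norm. For invariance I would use the group law $(h,x)(h_0,x_0) = (hh_0, hx_0 + x)$: the inner $L^p$-integral in $x$ is unchanged by the translation $x \mapsto x + hx_0$, and a change of variables in the $\grH$-variable turns the action on the weight $|\det(h)|^{s'}$ into multiplication by a fixed power of $|\det(h_0)|$, so $\ell_g$ and right translation are bounded with a constant depending only on $g$; continuity of right translation then follows by approximation from $C_c(\grG)$ and dominated convergence. Because $p,q < \infty$, the functions in $C_c(\grG)$ are dense in $L^{p,q}_{s'}$, so the series in Theorem~\ref{thm:12} will converge in $\Co_{\mathcal{S}_\cone}^\phi Y$, and not merely weakly in $\mathcal{S}_\cone^*$.

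The substantive hypotheses are the convolution conditions: continuity on $Y$ of $F \mapsto F * |W_{\pi(D^\alpha)\phi}(\phi)|$ for Theorem~\ref{thm:discretization} and of $F \mapsto F * |W_\phi(\pi^*(D^\alpha)\phi)|$ for Theorem~\ref{thm:12}, for all $|\alpha| \leq \dim(\grG)$. Here I would invoke the theorem identifying $\mathcal{S}_\cone$ as the space of smooth vectors for $\pi$, which makes $\phi$ both $\pi$-weakly and $\pi^*$-weakly differentiable of every order; consequently each convolution kernel is again a wavelet transform built from Schwartz-cone data. By Lemma~\ref{lemma:SinBesov}, together with the change of variable in Lemma~\ref{lem:newhaar}, every such transform lies in $L^1_t(\grG)$ for every real $t$, i.e. it decays faster than any power of $|\det(h)|$ and is rapidly decreasing in $x$. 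The plan is then to prove a weighted Young inequality on the non-unimodular group $\grG = \grH\ltimes V$: convolution by a kernel lying in $L^1_t$ for all $t$ is bounded on $L^{p,q}_{s'}$, because the weighted $L^1$ mass that controls the passage of $|\det(h)|^{s'}$ through the group law is finite for such kernels.

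The main obstacle is precisely this weighted Young estimate on the semidirect product. Because $\grG$ is non-unimodular, with Haar measure $\frac{dx\,dh}{\det(h)}$, the $\grH$-dilation inside the group law couples the $V$-translation to the weight $|\det(h)|^{s'}$, and one must track how the modular factor combines with $s'$ and with the $\grH$-support of the kernel. The rapid decay of the kernels in both variables, guaranteed by the smoothness of $\phi$ and by Lemma~\ref{lem:rapidoncone}, is what absorbs these factors; I would organize the estimate by first applying ordinary Young's inequality in the $V$-variable, with the dilation absorbed into a power of $\det(h)$, and then integrating in $h$ against the resulting weighted $L^1(\grH)$ kernel. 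Once this continuity is established, Theorems~\ref{thm:discretization} and \ref{thm:12} apply, and choosing $\epsilon$ small enough produces the well-spread set $\{(h_i,x_i)\}$, the constants $A_1,A_2$, and the operators $T_1,T_2$ underlying the frame and the atomic decomposition.
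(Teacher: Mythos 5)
Your proposal follows essentially the same route as the paper: identify $\besov^{p,q}_s$ with $\Co_{\mathcal{S}_\cone}^\phi L^{p,q}_{s'}(\grG)$ via the wavelet characterization theorem, use the smooth-vectors theorem to get $\pi$- and $\pi^*$-weak differentiability of all orders, invoke Lemma~\ref{lemma:SinBesov} to place the kernels $W_{\pi(D^\alpha)\phi}(\phi)$ and $W_\phi(\pi^*(D^\alpha)\phi)$ in the weighted $L^1$ spaces, and then apply Theorems~\ref{thm:discretization} and \ref{thm:12}. The only difference is one of detail, not of route: the paper simply asserts that the $L^1_s$ membership yields the required convolution continuity on $L^{p,q}_{s'}$, whereas you spell out the verification of solidity, invariance, density of $C_c(\grG)$, and the weighted Young inequality on the non-unimodular group (correctly organized as Young in the $V$-variable followed by a weighted $\grH$-integral absorbing the modular factors via the kernels' rapid decay).
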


\bibliographystyle{plain}

\end{document}